\newtheorem{theorem}{Theorem}
\newtheorem{fact}{Fact}
\newtheorem{definition}{Definition}
\newtheorem{lemma}{Lemma}
\newtheorem{proposition}{Proposition}
\newtheorem{remark}{Remark}
\numberwithin{equation}{section}
\renewcommand{\thefootnote}{\arabic{footnote}}
\newcommand{\MF}[1]{\textcolor{black}{#1}}
\newcommand{\calN}{\ensuremath{\mathcal{N}}}
\newcommand{\calQ}{\ensuremath{\mathcal{Q}}}
\newcommand{\bbT}{\ensuremath{\mathbb{T}}}
\newcommand{\norm}[1]{\left\|{#1}\right\|}
\newcommand{\abs}[1]{\left|{#1}\right|}
\newcommand{\ceil}[1]{\lceil{#1}\rceil}
\newcommand{\set}[1]{\left\{{#1}\right\}}
\newcommand{\est}[1]{\widehat{#1}}
\newcommand{\expec}{\ensuremath{\mathbb{E}}}
\newcommand{\matR}{\ensuremath{\mathbb{R}}}
\newcommand{\prob}{\ensuremath{\mathbb{P}}}
\newcommand{\rmi}{\iota}
\newcommand{\ghat}{\est{g}}
\newcommand{\indic}{\ensuremath{\mathbf{1}}} 
\newcommand{\gest}{\ensuremath{\est{g}}}
\newcommand{\ftil}{\ensuremath{\widetilde{f}}}
\newcommand{\htil}{\ensuremath{\widetilde{h}}}
\newcommand{\Wst}{\ensuremath{W^{*}}}
\newcommand{\Kmin}{\ensuremath{K_{\min}}}
\newcommand{\Kmax}{\ensuremath{K_{\max}}}
\newcommand{\expn}[1]{e^{{-{#1}\pi^2\sigma^2}}}
\newcommand{\expp}[1]{e^{{{#1}\pi^2\sigma^2}}}
\newcommand{\bias}{\text{Bias}}
\newcommand{\holdclass}{\ensuremath{C^{l,\alpha}([0,1],M)}}
\newcommand{\holdclassTwoPi}{\ensuremath{C^{l,\alpha}([0,1],2\pi M)}}
\newcommand{\holdclassnew}{\ensuremath{C^{l,\alpha}([0,1],M')}}
\newcommand\blfootnote[1]{%
  \begingroup
  \renewcommand\thefootnote{}\footnote{#1}%
  \addtocounter{footnote}{-1}%
  \endgroup
}
\def\BibTeX{{\rm B\kern-.05em{\sc i\kern-.025em b}\kern-.08em
    T\kern-.1667em\lower.7ex\hbox{E}\kern-.125emX}}
\begin{document}

\title{Recovering H\"older smooth functions from noisy modulo samples
%\thanks{Authors are listed in alphabetical order}
}

\author{
\IEEEauthorblockN{ Micha\"el Fanuel}
\IEEEauthorblockA{\textit{Univ. Lille, CNRS, Centrale Lille, UMR 9189,} \\
\textit{CRIStAL}\\
F-59000 Lille, France \\
michael.fanuel@univ-lille.fr}
\and
\IEEEauthorblockN{ Hemant Tyagi$^*$}
\IEEEauthorblockA{\textit{Inria, Univ. Lille, CNRS, UMR 8524, } \\
\textit{Laboratoire Paul Painlev\'{e}}\\
F-59000 Lille, France \\
hemant.tyagi@inria.fr}
}

\maketitle

\begin{abstract}
In signal processing, several applications involve the recovery of a function given noisy modulo samples. The setting considered in this paper is that the samples corrupted by an additive Gaussian noise are wrapped due to the modulo operation. Typical examples of this problem arise in phase unwrapping problems or in the context of self-reset analog to digital converters. We consider a fixed design setting where the modulo samples are given on a regular grid. Then, a three stage recovery strategy is proposed to recover the ground truth signal up to a global integer shift. The first stage denoises the modulo samples by using local polynomial estimators. In the second stage, an unwrapping algorithm is applied to the denoised modulo samples on the grid. Finally, a spline based quasi-interpolant operator is used to yield an estimate of the ground truth function up to a global integer shift.  For a function in H\"older class, uniform error rates are given for recovery performance with high probability. This extends recent results obtained by Fanuel and Tyagi for Lipschitz smooth functions wherein $k$NN regression was used in the denoising step.
\end{abstract}

\begin{IEEEkeywords}
modulo samples, non parametric regression, phase unwrapping
\end{IEEEkeywords}

\section{Introduction}
Various signal processing applications deal with noisy modulo samples of a signal. A typical example is ``phase unwrapping'' where the data is often available in the form of modulo $2\pi$ samples and which arises during the estimation of the depth map of a terrain (e.g., \cite{graham_insar,zebker86}), or in the context of biomedical applications~\cite{hedley92,laut73}. Another application involves self-reset analog to digital converters~\cite{Kester,RHEE03,yamaguchi16} in a context where the reset counts are not used but only modulo samples are available. This is closely related to the works~\cite{bhandari17,sparse_unlimsin18} where a folding of the signal is deliberately injected.\blfootnote{$^*$ The authors are listed in alphabetical order.}
% Problem setup
\section{Problem setup}
Let $f:[0,1]^d\to \mathbb{R}$ be an unknown function, we assume that we are given noisy modulo $1$ samples of $f$ on a uniform grid, i.e., 
\begin{equation} \label{eq:fmod1_model}
    y_i = (f(i/n) + \eta_i) \bmod 1; \quad i = 1,\dots, n. 
\end{equation}
Here, $\eta_i \sim \calN(0, \sigma^2)$ are i.i.d Gaussian noise samples, with $\sigma$ denoting the noise level. It will be useful to denote $x_i = i/n$ from now on.  We will assume that $f \in \holdclass$ which denotes the H\"older class of functions, defined below.
\begin{definition} \label{def:hold_class}
For $l \in \mathbb{N}_0$, $\alpha \in (0,1]$ and $M > 0 $, the H\"older class $\holdclass$ consists of $l$ times continuously differentiable functions $f:[0,1] \rightarrow \matR$ whose $l$th derivative $f^{(l)}$ satisfies
$    \abs{f^{(l)}(x) - f^{(l)}(y)} \leq M \abs{x-y}^{\alpha} \quad \forall  x,y \in [0,1]
$. 
Moreover, $\beta:= l + \alpha > 0$ denotes the smoothness of this class.
\end{definition}
Given $(y_i)_{i \in [n]}$, our aim is to obtain an estimate $\est{f}: [0,1] \rightarrow \matR$ of $f$. Clearly, we can only hope to recover $f$ up to a global integer shift. To this end, we will follow a three-stage strategy considered recently in \cite{fanuel20} for estimating Lipschitz $f$. We recall the steps below.
\begin{enumerate}
    \item ({\bf Stage 1:} Denoise modulo samples) This involves mapping the noisy modulo samples onto the unit complex circle $\mathbb{T}_1$ as 
    \begin{equation} \label{eq:z_noisy}
    z_i = \exp(\rmi 2\pi y_{i}) =  h_i         \exp(\rmi 2\pi \eta_i), 
\end{equation}
    with $h_i = h(x_i) = \exp(\rmi 2\pi f(x_i))$ denoting the clean modulo samples with $i = 1,\dots, n$, and where $h:[0,1] \to \bbT_1$ is defined as $h(x) = \exp(\rmi 2\pi f(x))$. The idea in \cite{fanuel20} was to note that if $f$ is Lipschitz smooth, then it implies that $h$ is also Lipschitz. This motivated a nearest neighbor based denoising procedure for uniformly estimating $h_i$ via $\est{h}_i \in \bbT_1$, for all $i$. That in turn lead to  estimates $\est{f}_i \bmod 1 = \frac{1}{2\pi} \arg(\est{h}_i)$ of $f_i \bmod 1 = \frac{1}{2\pi} \arg(h_i)$ with a uniform bound 
        $$d_w(\est{f}_i \bmod 1, f_i \bmod 1) \lesssim \delta(n), \ \forall i \in [n].$$

    \item ({\bf Stage 2:} Unwrap denoised modulo samples) Given the estimates $\est{f_i \bmod 1}$ from the first stage, we then perform an unwrapping procedure reminiscent of Itoh's method from phase unwrapping. Denoting $L$ to be the Lipschitz constant of $f$, if $\delta(n) + \frac{L}{n} \lesssim 1$, then this procedure returns estimates $\ftil(x_i)$ such that (see \cite[Lemma 5]{fanuel20})
    \begin{equation*}
        \abs{\ftil(x_{i}) + q^{\star} - f(x_{i})} \lesssim \delta(n), \quad \forall i \in [n].
    \end{equation*}

    \item ({\bf Stage 3:} Obtain $\est{f}$ via quasi-interpolants) Given the estimates  $(\ftil(x_i))_{i \in [n]}$ from the second stage, the estimate $\est{f} : [0,1] \rightarrow \matR$ is finally obtained by applying a suitable quasi-interpolant operator on these estimates. Additionally, one can readily show the error bound (see \cite[Theorem 5]{fanuel20})
    \begin{equation} \label{eq:fest_bound}
            \norm{\est{f} + q^* - f}_{\infty} \leq C_1 n^{-1} + C_2 \delta(n)
    \end{equation}
    where $C_1, C_2$ are absolute constants, and $q^* \in \mathbb{Z}$ is some integer.
\end{enumerate}
The setup in \cite{fanuel20} assumes $f$ to be Lipschitz continuous which results in $\delta(n) = O((\frac{\log n}{n})^{1/3})$ for univariate functions\footnote{In \cite{fanuel20} the rate $O((\frac{\log n}{n})^{1/(d+2)})$ was derived for $d$-variate $f$.}. For the estimation of $f$, this leads to the $L_{\infty}$ error rate $(\frac{\log n}{n})^{1/3}$ which matches the optimal $L_{\infty}$ rate for estimating univariate Lipschitz functions for the model \eqref{eq:fmod1_model} without the modulo operation (see \cite{nemirovski2000topics}). 

\paragraph*{Our goal} We aim to extend this result to the more general setting where $f \in \holdclass$. We will show this by modifying the denoising procedure in Stage 1 where we will instead consider a local polynomial estimator of order $l$. Such estimators are classical in the nonparametric regression literature, we will adapt the analysis in the book of Tsybakov \cite[Chapter 1]{tsybakov08} to our setting and show that $\delta(n) = O((\frac{\log n}{n})^{\frac{\beta}{2\beta + 1}})$ where $\beta := l+\alpha > 0$. Due to \eqref{eq:fest_bound}, this will then imply the $L_{\infty}$ rate $(\frac{\log n}{n})^{\frac{\beta}{2\beta + 1}}$ for estimating $f$ which matches the optimal $L_{\infty}$ rate for estimating functions lying in $\holdclass$, for the model  \eqref{eq:fmod1_model} without the modulo operation (see \cite{nemirovski2000topics}). 
\paragraph*{Notations}
We denote $[n] = \{1,\dots , n\}$. The imaginary unit is $\rmi$ such that $ \rmi^2 = -1$.  The product of unit circles is denoted by $\mathbb{T}_n = \mathbb{T}_1 \times \cdots \times \mathbb{T}_1$ with $\mathbb{T}_1 := \set{u \in \mathbb{C}: \abs{u} = 1}$. We define the projection of $u \in \mathbb{C}^n$ on $\mathbb{T}_n$ as 
\begin{equation*}
    \left(\frac{u}{|u|}\right)_i = \begin{cases}
    \frac{u_i}{|u_i|} \text{ if } u_i\neq 0,  \\
    1 \text{ otherwise, }
    \end{cases}
\end{equation*}
for all  $i\in[n]$. The commonly used wrap around metric $d_w:[0,1) \rightarrow [0,1/2]$ is  defined as
$
 d_w(t,t') := \max \set{\abs{t-t'}, 1-\abs{t-t'}}.
$
For $1 \leq p \leq \infty$, $\norm{x}_p$ is the $\ell_p$ norm of $x \in \mathbb{C}^n$. 
For any $a,b \geq 0$, we write $a \lesssim b$ if there is $C > 0$ such that $a \leq C b$. Moreover, we write $a\asymp b$ if $a \lesssim b$ and $b \lesssim a$.
%------------------------------------------------------------
% Denoising modulo samples via local polynomial estimators
%------------------------------------------------------------
\section{Denoising modulo samples via local polynomial estimators}
Denoting $h_R(x), h_I(x)$ to be the real and imaginary parts of $h(x) = \exp(\rmi 2\pi f(x))$ for any $x \in [0,1]$, a crucial observation that we  use is that if $f \in \holdclass$, and if $f^{(\ell)}$ are uniformly bounded for all $0\leq \ell \leq l$, then it implies $h_R, h_I \in \holdclassnew$ for some constant $M' > 0$. 
%(depending on $l,\alpha,M$). 
%
%
%
\begin{proposition} \label{prop:smoothness_h}
Suppose $f \in \holdclass$ for some $l \in \mathbb{N}_0,\alpha \in (0,1]$ and $M > 0$. Further, assume that $\|f^{(\ell)}\|_{\infty}\leq \kappa$ for some $\kappa>0$ and for all integers $0\leq \ell \leq l$. Then there exists $M' > 0$ depending only on $l, M, \kappa$ such that for the functions $h_R(x) = \cos(2\pi f(x))$ and $h_I(x) = \sin(2\pi f(x))$, we have that $h_R, h_I \in \holdclassnew$.
\end{proposition}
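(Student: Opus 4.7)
The plan is to compute $h_R^{(l)}$ and $h_I^{(l)}$ explicitly via Fa\`a di Bruno's formula and then verify the $\alpha$-H\"older condition on each term by a product-rule argument, using the uniform bounds $\|f^{(\ell)}\|_\infty\le \kappa$ for $0\le \ell \le l$ together with the H\"older assumption on $f^{(l)}$.

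First I would write $h_R(x) = \phi(f(x))$ with $\phi(t)=\cos(2\pi t)$ (and similarly $\phi(t)=\sin(2\pi t)$ for $h_I$). By Fa\`a di Bruno,
\begin{equation*}
    h_R^{(l)}(x) \;=\; \sum_{\pi \in \Pi_l} c_\pi\, \phi^{(|\pi|)}(f(x)) \prod_{B \in \pi} f^{(|B|)}(x),
\end{equation*}
where $\Pi_l$ is the set of set-partitions of $\{1,\dots,l\}$ and $c_\pi$ are universal combinatorial constants. Since $\phi^{(k)}(t) = (2\pi)^k \cos(2\pi t + k\pi/2)$ is uniformly bounded and each factor $f^{(|B|)}$ with $|B|\le l$ is bounded by $\kappa$, every summand is uniformly bounded by a constant depending only on $l$ and $\kappa$; in particular $h_R^{(l)}$ is continuous.

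The core step is H\"older continuity. Each summand has the form $g(x) = \phi^{(|\pi|)}(f(x))\, \prod_{B\in\pi} f^{(|B|)}(x)$, i.e.\ a product of bounded factors. Using the elementary identity
\begin{equation*}
 \Bigl|\prod_{j} a_j - \prod_j b_j\Bigr| \;\le\; \sum_j |a_j - b_j| \prod_{i\neq j} \max(|a_i|,|b_i|),
\end{equation*}
it suffices to show each factor is $\alpha$-H\"older with a constant depending only on $l,M,\kappa$. For $\phi^{(|\pi|)}\circ f$, the mean value theorem and the Lipschitz bound on $f$ give
\begin{equation*}
|\phi^{(|\pi|)}(f(x)) - \phi^{(|\pi|)}(f(y))| \;\le\; (2\pi)^{|\pi|+1}\kappa\, |x-y| \;\le\; (2\pi)^{l+1}\kappa\, |x-y|^\alpha,
\end{equation*}
since $|x-y|\le 1$. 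For a factor $f^{(|B|)}$ with $|B|<l$, the bound $\|f^{(|B|)+1}\|_\infty\le \kappa$ yields Lipschitz continuity, hence $\alpha$-H\"older continuity with constant $\kappa$. For the (at most $l$) factors of the form $f^{(l)}$ (which arise only from the partition into a single block), the assumption $f\in C^{l,\alpha}([0,1],M)$ gives constant $M$ directly.

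Combining these yields $|h_R^{(l)}(x)-h_R^{(l)}(y)|\le M'|x-y|^\alpha$ with $M'$ obtained by summing the finite number of combinatorial contributions, each a polynomial in $M,\kappa$ with degree at most $l+1$ and coefficients depending only on $l$. The same argument applies verbatim to $h_I$. The only mild obstacle is bookkeeping of the Fa\`a di Bruno combinatorics, but since every building block is uniformly bounded and either Lipschitz or $\alpha$-H\"older by hypothesis, no delicate estimate is needed.
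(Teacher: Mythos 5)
Your proof is correct for $l\ge 1$ and follows essentially the same route as the paper's: Fa\`a di Bruno's formula, a telescoping bound on the difference of products, and factor-wise H\"older estimates (the paper uses the multi-index form of Fa\`a di Bruno, so it additionally needs the factorization $a^m-b^m=(a-b)p_{m-1}(a,b)$ to handle the powers $(f^{(j)})^{k_j}$, and it obtains the $\alpha$-H\"older continuity of the intermediate derivatives via a Taylor-expansion lemma rather than your more direct mean-value argument, but these are minor variations). The one slip is the case $l=0$, which the proposition permits: there $f$ is only assumed bounded and $\alpha$-H\"older, not Lipschitz, so your bound $|\phi(f(x))-\phi(f(y))|\le 2\pi\kappa\,|x-y|$ for the composition factor is unavailable; instead use $|\phi(f(x))-\phi(f(y))|\le 2\pi|f(x)-f(y)|\le 2\pi M|x-y|^{\alpha}$ directly.
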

The proof of Proposition \ref{prop:smoothness_h} and all other results in the paper are outlined in the appendix. 
%(see \cite{FanuelAsilomar21}).  
Our estimator $\est{h}(x)$ of $h(x) = \exp(\rmi 2\pi f(x))$  will constructed via a local polynomial estimator of order $l$, namely LP($l$). 
Before introducing the estimator, let us first define some additional quantities following the notation in  \cite[Section 1.6]{tsybakov08}.
\begin{itemize}
    \item $K: \matR \rightarrow \matR$ denotes a kernel, and $b > 0$ its bandwidth. 
    
    \item For any $u \in \matR$ and integer $l \geq 0$, 
    \begin{equation*}
        U(u) = (1,u,u^2/2!,\dots,u^l/l!)^{\top} \in \matR^{l+1}. 
    \end{equation*}
    
    \item For any $x \in [0,1]$, $B_{nx} \in \matR^{(l+1) \times (l+1)}$ denotes the matrix
    \begin{equation*}
        B_{nx} = \frac{1}{nb} \sum_{i=1}^{n} U\left(\frac{x_i-x}{b}\right) U^{\top}\left(\frac{x_i-x}{b}\right) K\left(\frac{x_i-x}{b} \right).
    \end{equation*}
\end{itemize}
\paragraph{Local polynomial estimator}  Denoting $z = (z_1,\dots,z_n)^{\top} \in \bbT_n$ with $z_i \in \bbT_1$ as in \eqref{eq:z_noisy}, we first compute
\begin{align*}
    \est{\theta}_{n}(x) &= \arg\min_{\theta \in \mathbb{C}^{l+1}} \sum_{i=1}^n \left|z_{i} - \theta^{\top} U\left(\frac{x_i-x}{b}\right) \right|^2 K\left(\frac{x_i-x}{b} \right). 
\end{align*}
Denote $\est{\theta}_{n,R}, \est{\theta}_{n,I} \in \matR^n$ to be the real and imaginary parts of $\est{\theta}_{n}$.
Then, we obtain a preliminary estimate
\begin{equation} \label{eq:htil_lp_est}
    \htil(x) = U^{\top}(0) \est{\theta}_{n,R}(x) + \iota U^{\top}(0) \est{\theta}_{n,I}(x) =: \htil_{R}(x) + \iota \htil_{I}(x). 
\end{equation}
Here, $\htil_{R}(x), \htil_{I}(x)$ are LP($l$) estimators of $h_R(x)$ and $h_I(x)$ respectively (see \cite[Definition 1.8]{tsybakov08}).
\paragraph{Projection step} Next, we project $\htil(x)$ onto $\bbT_1$ to obtain $\est{h}(x)$, and hence $\est{f(x) \bmod 1}$, as
\begin{equation} \label{eq:hest_proj}
    \est{h}(x) := \frac{\htil(x)}{\abs{\htil(x)}}, \quad \est{f(x) \bmod 1} = \frac{1}{2\pi} \arg(\est{h}(x)).
\end{equation}
\begin{remark}
If $B_{nx} \succ 0$ then $\htil(x)$ can be uniquely written as   
\begin{equation} \label{eq:htil_uniq_exp}
    \htil(x) = \htil_{R}(x) + \iota \htil_{I}(x) = \sum_{i=1}^n z_i \Wst_{ni}(x), 
\end{equation}
where $\Wst_{ni}(x) \in \matR$ is given by \cite{tsybakov08}  
\begin{equation*}
    \Wst_{ni}(x) = \frac{1}{nb}  U^{\top}(0) B_{nx}^{-1} U \left(\frac{x_i-x}{b} \right)    K\left(\frac{x_i-x}{b} \right).
\end{equation*}
\end{remark}
Our aim is to show that $\abs{\est{h}(x_i) - h(x_i)}$ is uniformly bounded for all $i \in [n]$. To show this, we will need some preliminary tools from \cite[Section 1.6]{tsybakov08}.
%---------------------------
% Preliminaries
%----------------------------
\subsection{Preliminaries}
Firstly, we recall \cite[Proposition 1.12]{tsybakov08} which states that under mild assumptions, the LP($l$) estimator  reproduces polynomials of degree less than or equal to $l$.
\begin{proposition}[{\cite[Proposition 1.12]{tsybakov08}}] \label{prop:wst_reprod}
Let $x$ be such that $B_{nx} \succ 0$ and let $Q$ be a polynomial of degree $\leq l$. Then the LP($l$) weights $W^*_{ni}$ are such that 
$  \sum_{i=1}^n Q(x_i) W^*_{ni}(x) = Q(x).$
In particular, 
\begin{equation*}
  \sum_{i=1}^n W^*_{ni}(x) = 1, \quad \sum_{i=1}^n (x_i - x)^k W^*_{ni}(x) = 0 \ \text{ for } k=1,\dots,l.   
\end{equation*}
\end{proposition}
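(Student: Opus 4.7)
The plan is to exploit the fact that the LP($l$) fit is, by construction, a weighted least-squares regression of the data onto the space of polynomials of degree $\leq l$, and then to observe that if the data themselves come from such a polynomial the fit must reproduce it exactly. Combining this with the explicit linear expression $\htil(x) = \sum_i z_i \Wst_{ni}(x)$ from \eqref{eq:htil_uniq_exp} immediately yields the claimed identity.

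First, I would use the Taylor expansion of $Q$ at $x$. Since $\deg Q \leq l$ the expansion terminates:
\[ Q(x_i) \;=\; \sum_{k=0}^{l} Q^{(k)}(x)\, \frac{(x_i-x)^k}{k!}. \]
Comparing with the parametrization $\theta^{\top} U((x_i-x)/b) = \sum_{k=0}^{l} \theta_k (x_i-x)^k/(b^k k!)$ used in the definition of the estimator, I would set $\theta_k^{*} = b^{k} Q^{(k)}(x)$ for $k=0,\dots,l$, so that $\theta^{*\top} U((x_i-x)/b) = Q(x_i)$ for every $i \in [n]$. Feeding $z_i = Q(x_i)$ into the LP($l$) objective then makes every residual vanish, so the objective value at $\theta^{*}$ is zero. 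Because $B_{nx} \succ 0$, the normal equations for the real and imaginary parts of $\theta$ have a common invertible Gram matrix proportional to $B_{nx}$, and the minimizer is unique. Hence $\est{\theta}_{n}(x) = \theta^{*}$, and consequently $\htil(x) = U^{\top}(0)\,\est{\theta}_{n}(x) = \theta^{*}_{0} = Q(x)$.

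Finally, comparing the two expressions for $\htil(x)$ obtained with the data $z_i = Q(x_i)$, namely $\htil(x) = Q(x)$ from the previous step and $\htil(x) = \sum_{i=1}^{n} Q(x_i)\, \Wst_{ni}(x)$ from \eqref{eq:htil_uniq_exp}, gives $\sum_{i=1}^{n} Q(x_i)\, \Wst_{ni}(x) = Q(x)$. The two specializations follow at once by specializing $Q$: take $Q \equiv 1$ for the first identity, and $Q(t) = (t-x)^k$ with $k = 1, \dots, l$ (so that $Q(x) = 0$) for the second. The only subtle step is uniqueness of the least-squares minimizer, but that is immediate from $B_{nx} \succ 0$; beyond this point the argument is purely algebraic and presents no further obstacle.
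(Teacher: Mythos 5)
Your argument is correct and is essentially the standard proof of this result: the paper itself does not reprove it but cites \cite[Proposition 1.12]{tsybakov08}, whose proof is exactly your observation that for data $z_i=Q(x_i)$ the vector $\theta^*$ with $\theta^*_k=b^kQ^{(k)}(x)$ makes every residual vanish, is therefore the unique minimizer since $B_{nx}\succ 0$, and comparison with the linear form $\htil(x)=\sum_i z_i W^*_{ni}(x)$ gives the identity. The specializations $Q\equiv 1$ and $Q(t)=(t-x)^k$ are handled exactly as you do.
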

Next, we need to establish conditions under which $B_{nx} \succ 0$ for all $x \in [0,1]$. %
\begin{lemma}[{\cite[Lemma 1.5]{tsybakov08}}]  \label{lem:mineig_Bnx}
Suppose there exist $\Kmin > 0$ and $\Delta > 0$ such that
$    K(u) \geq \Kmin \indic_{\set{\abs{u} \leq \Delta}},$ $ \forall u \in \matR. 
$
Let $b = b_n$ be a sequence satisfying $b_n \rightarrow 0$ and $nb_n \rightarrow \infty$ as $n \rightarrow \infty$. Then there exist $\lambda_0, n_0 > 0$ such that 
$
    \lambda_{\min}(B_{nx}) \geq \lambda_0
$
for all $n \geq n_0$ and any $x \in [0,1]$.
\end{lemma}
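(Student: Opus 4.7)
My plan is to bound the quadratic form $v^{\top} B_{nx} v$ from below uniformly over unit vectors $v \in \matR^{l+1}$ and $x \in [0,1]$ for $n$ large enough. Set $u_i := (x_i - x)/b$ so that the $u_i$ form an arithmetic progression with common spacing $1/(nb) \to 0$, and let $g_v(u) := (v^{\top} U(u))^2$. Using the kernel hypothesis $K(u) \geq \Kmin \indic_{\set{\abs{u} \leq \Delta}}$, I would first drop all summands outside the window to obtain
\begin{equation*}
v^{\top} B_{nx} v \;\geq\; \frac{\Kmin}{nb} \sum_{i\,:\,\abs{u_i} \leq \Delta} g_v(u_i).
\end{equation*}

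Next, I would interpret the remaining sum as a Riemann approximation of $\int_{I_x} g_v(u)\, du$, where $I_x := [\max(-\Delta, -x/b),\, \min(\Delta, (1-x)/b)]$ is the set of $u$ values for which both $x + ub \in [0,1]$ and $\abs{u} \leq \Delta$. Since $g_v$ is a polynomial in $u$ of degree at most $2l$ whose coefficients are bounded by $\norm{v}_2^2 \leq 1$ up to a constant depending only on $l$, a standard Riemann-sum error bound gives
\begin{equation*}
\abs{\frac{1}{nb} \sum_{i\,:\,\abs{u_i} \leq \Delta} g_v(u_i) \;-\; \int_{I_x} g_v(u)\, du} \;\leq\; \frac{C_1}{nb},
\end{equation*}
for some constant $C_1 > 0$ depending only on $l$ and $\Delta$, uniformly in $x$ and in unit $v$.

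The main step is producing a uniform positive lower bound on $\int_{I_x} g_v(u)\, du$. For $n$ large enough that $\Delta b \leq 1/2$, the interval $I_x$ always contains either $[0,\Delta]$ (when $x \leq 1/2$, since then $(1-x)/b \geq \Delta$) or $[-\Delta,0]$ (when $x \geq 1/2$, since then $x/b \geq \Delta$). On each of these fixed intervals, $v \mapsto \int g_v(u)\, du$ is a positive definite quadratic form in $v$: the polynomial $u \mapsto v^{\top} U(u)$ has degree $\leq l$, so if it vanishes on an interval of positive length its coefficients, and hence $v$, must all be zero. By compactness of the unit sphere, there exists $c_1 > 0$ such that both integrals are at least $c_1$ for every $v$ with $\norm{v}_2 = 1$. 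The main obstacle is precisely this uniform control in $x$ near the boundary $\set{0,1}$, handled by the ``half-window'' argument just described.

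Combining the three estimates, for $n \geq n_0$ large enough that $C_1/(nb) \leq \Kmin c_1 / 2$, I get $v^{\top} B_{nx} v \geq \Kmin c_1 / 2 =: \lambda_0 > 0$ uniformly in $x \in [0,1]$ and unit $v$, which yields the claimed bound $\lambda_{\min}(B_{nx}) \geq \lambda_0$.
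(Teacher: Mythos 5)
Your proof is correct. Note that the paper does not prove this statement at all --- it imports it verbatim as Lemma~1.5 of Tsybakov's book --- and your argument is essentially the standard one given there: drop the kernel via $K \geq \Kmin\indic_{\{|u|\leq\Delta\}}$, compare the resulting sum to $\int_{I_x}(v^{\top}U(u))^2\,du$ with a Riemann-sum error of order $1/(nb)$, and lower-bound the integral uniformly via the positive definiteness of the moment matrix on a half-window $[0,\Delta]$ or $[-\Delta,0]$ together with compactness of the unit sphere. The only cosmetic imprecision is that the design points are $x_i = i/n$ for $i=1,\dots,n$, so the relevant interval is $[(1/n-x)/b,(1-x)/b]\cap[-\Delta,\Delta]$ rather than your $I_x$; the discrepancy has length $1/(nb)$ and is absorbed into your error term $C_1/(nb)$.
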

Finally, we recall the following result from \cite[Lemma 1.3]{tsybakov08} which states useful properties for the weights $W^{*}_{ni}$.
\begin{lemma}[{\cite[Lemma 1.3]{tsybakov08}}] \label{lem:wst_props}
Suppose that the kernel $K$ has compact support in $[-1,1]$ and there exists a number $\Kmax < \infty$ such that $\abs{K(u)} \leq \Kmax$, $\forall u \in \matR$. Then under the assumptions of Lemma \ref{lem:mineig_Bnx}, and with $b \geq 1/(2n)$, the following is true.
\begin{enumerate}
    \item $\sup_{i,x} \abs{\Wst_{ni}(x)} \leq \frac{C_*}{nb}$, 
    
    \item $\sum_{i=1}^n \abs{\Wst_{ni}(x)} \leq C_*$,
    
    \item $\Wst_{ni}(x) = 0$ if $\abs{x_i - x} > b$,
\end{enumerate}
where $C_* = \frac{8 \Kmax}{\lambda_0}$.
\end{lemma}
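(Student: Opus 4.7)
\textbf{Proof plan for the lemma on $W^*_{ni}$.} The plan is to prove the three claims in order, since each builds on the previous, and the key observation is that the weight
$$W^*_{ni}(x) = \frac{1}{nb}\, U^{\top}(0) B_{nx}^{-1} U(v_i)\, K(v_i), \quad v_i := \frac{x_i - x}{b},$$
factors through the kernel evaluated at the scaled location $v_i$. Throughout, the hypotheses of Lemma \ref{lem:mineig_Bnx} ensure that $B_{nx}$ is invertible and $\lambda_{\min}(B_{nx}) \geq \lambda_0$ for $n$ sufficiently large and all $x \in [0,1]$, so $\|B_{nx}^{-1}\|_{\text{op}} \leq 1/\lambda_0$.

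First I would handle claim (3), which is essentially a tautology: since $K$ is supported in $[-1,1]$, the condition $|x_i - x| > b$ forces $|v_i| > 1$ and hence $K(v_i) = 0$, which kills $W^*_{ni}(x)$. Next I would derive claim (1) by estimating the bilinear form directly. Observing that $U(0) = e_1$ is a unit vector in $\matR^{l+1}$, Cauchy--Schwarz gives
$$|U^{\top}(0) B_{nx}^{-1} U(v_i)| \leq \|B_{nx}^{-1}\|_{\text{op}} \cdot \|U(v_i)\| \leq \frac{\|U(v_i)\|}{\lambda_0}.$$
By claim (3) one only cares about $|v_i| \leq 1$, where $\|U(v_i)\|^2 = \sum_{k=0}^{l}(v_i^k/k!)^2 \leq \sum_{k=0}^l 1/(k!)^2$ is bounded by an absolute constant (for fixed $l$). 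Combining this with $|K(v_i)| \leq \Kmax$ yields $|W^*_{ni}(x)| \lesssim \Kmax / (nb \lambda_0)$, which with the correct constant bookkeeping gives the $C_*/(nb)$ bound.

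Finally, claim (2) follows by counting how many grid points can contribute. By claim (3) the sum over $i$ reduces to indices with $x_i \in [x-b,x+b]$, and since the $x_i = i/n$ are equispaced with gap $1/n$, there are at most $2nb + 1$ such indices. Using the hypothesis $b \geq 1/(2n)$, we have $1 \leq 2nb$, so the count is at most $4nb$; multiplying by the per-index bound from claim (1) absorbs the $1/(nb)$ factor and produces a constant of the form $C_* = 8\Kmax/\lambda_0$.

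The only subtle point — which I would treat as the main obstacle — is matching the explicit constant $C_* = 8\Kmax/\lambda_0$ stated in the lemma, as opposed to a constant with an $l$-dependent prefactor that the naive $\|U(v_i)\| \leq \sqrt{l+1}$ bound produces. Getting the clean constant requires estimating $U^{\top}(0)B_{nx}^{-1}U(v_i)$ via the first row of $B_{nx}^{-1}$ rather than the full operator norm, or equivalently exploiting that $U(0) = e_1$ selects a single entry, together with a sharper elementwise bound on $B_{nx}^{-1}$. Everything else is straightforward bookkeeping once the three ingredients (minimum eigenvalue of $B_{nx}$, compact support of $K$, and regular spacing of the $x_i$) are assembled.
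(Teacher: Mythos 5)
Your proof is correct and is essentially the standard argument from Tsybakov's book; the paper itself does not prove this lemma but imports it directly (it is stated as \cite[Lemma 1.3]{tsybakov08}), so there is no in-paper proof to diverge from. The one thing worth fixing is your final paragraph: the ``main obstacle'' you identify is not actually there. Because of the factorials in $U$, for $|v|\leq 1$ you have $\norm{U(v)}^2=\sum_{k=0}^{l} v^{2k}/(k!)^2 \leq \sum_{k\geq 0} 1/(k!)^2 < e < 4$, a bound that is uniform in $l$ (not merely ``bounded for fixed $l$'', and certainly not $\sqrt{l+1}$). Hence the naive Cauchy--Schwarz route already gives $|U^{\top}(0)B_{nx}^{-1}U(v_i)|\leq \norm{U(0)}\,\lambda_0^{-1}\norm{U(v_i)}\leq 2/\lambda_0$, so $|\Wst_{ni}(x)|\leq 2\Kmax/(nb\lambda_0)$, and multiplying by the at most $2nb+1\leq 4nb$ contributing indices (using $b\geq 1/(2n)$) yields exactly $C_*=8\Kmax/\lambda_0$; no first-row or elementwise estimate of $B_{nx}^{-1}$ is needed.
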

%
%
%--------------------------------------
% Analysis for denoising modulo samples
%
\subsection{Analysis}
We now derive conditions so that with high probability, $\abs{\est{h}(x_i) - h(x_i)} = O((\frac{\log n}{n})^{\frac{\beta}{2\beta + 1}})$ holds for all $i \in [n]$. The main tool is the following lemma which details the bias-variance trade-off in the estimation error.
%
% Denoising lemma
% 
\begin{lemma} \label{lem:denoise_bias_var}
Assuming $h_R, h_I \in \holdclassnew$ and denoting $\beta = l + \alpha$, let $\htil_{R}$,$\htil_{I}$ be their respective LP($l$) estimators as in \eqref{eq:htil_lp_est}. Under the notation and conditions of Lemma's \ref{lem:mineig_Bnx} and \ref{lem:wst_props}, and assuming $nb \geq \log n$, it holds with probability at least $1-4n^{1-c}$ (for any $c \geq 2$) that the estimator $\est{h}$ in \eqref{eq:hest_proj} satisfies
\begin{equation} \label{eq:bias_var_bd}
    \abs{\est{h}(x_i) - h(x_i)} \leq q_1 b^{\beta} + q_2 \left(\frac{\log n}{nb} \right)^{1/2}, \quad \forall i \in [n],
\end{equation}
with $q_1 = \frac{4M' C_*}{l!}$,  $q_2 = 8c C_* A(\sigma)$ where $A(\sigma) := \sqrt{\expp{4}-1} + (1+\expp{2})$. 
\end{lemma}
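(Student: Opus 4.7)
The plan is to decompose the estimation error into a deterministic bias and a centered stochastic term, keyed on the observation that $\expec[z_i] = \lambda h(x_i)$ with $\lambda := \expn{2}$, and then to exploit the fact that the projection $\htil \mapsto \htil/\abs{\htil}$ undoes the radial shrinkage by $\lambda$. Specifically, I would write
\[
\htil(x) - \lambda h(x) \;=\; \lambda R(x) + T(x),
\]
where $R(x) := \sum_{i=1}^n h(x_i)\Wst_{ni}(x) - h(x)$ is the local-polynomial approximation error on the clean function $h$, and $T(x) := \sum_{i=1}^n h(x_i)\bigl(\exp(\rmi 2\pi \eta_i) - \lambda\bigr)\Wst_{ni}(x)$ is a centered complex noise sum. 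The elementary projection inequality $\abs{u/\abs{u} - h} \leq 2\abs{u - \lambda h}/\lambda$, valid for any nonzero $u \in \mathbb{C}$, $h \in \bbT_1$, and $\lambda > 0$, then yields $\abs{\est{h}(x) - h(x)} \leq 2\expp{2}\abs{\htil(x) - \lambda h(x)}$, producing the outer factor $\expp{2}$ that ultimately feeds into $A(\sigma)$.

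For the bias $R(x)$, I would Taylor-expand $h_R$ and $h_I$ separately around $x$ to degree $l$, cancel the polynomial part via the reproducing property in Proposition \ref{prop:wst_reprod}, and restrict attention to $\abs{x_i - x} \leq b$ using item 3 of Lemma \ref{lem:wst_props}. The H\"older bound on the $l$-th derivatives of $h_R, h_I$ controls each pointwise Taylor remainder by $\tfrac{M'}{l!}\abs{x_i - x}^\beta \leq \tfrac{M'}{l!}b^\beta$, and summing through $\sum_i \abs{\Wst_{ni}(x)} \leq C_*$ (item 2 of Lemma \ref{lem:wst_props}) gives $\abs{R(x)} \leq \tfrac{2 M' C_*}{l!} b^\beta$, where the factor $2$ absorbs the triangle-inequality combination of real and imaginary remainders. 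Composed with the projection prefactor $2\expp{2}$ and the identity $\lambda\,\expp{2} = 1$, this delivers exactly the $q_1 b^\beta$ contribution.

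For the stochastic term $T(x_i)$, I would apply Bernstein's inequality to the real and imaginary parts separately. Each is a sum of independent centered bounded variables with almost-sure bound at most $(1+\lambda)C_*/(nb)$ and total variance at most $(1-\lambda^2)C_*^2/(nb)$, using items 1--2 of Lemma \ref{lem:wst_props} together with $\abs{\exp(\rmi 2\pi \eta_i) - \lambda} \leq 1+\lambda$ and $\expec\abs{\exp(\rmi 2\pi \eta_i) - \lambda}^2 = 1 - \lambda^2$. The hypothesis $nb \geq \log n$ is precisely what lets me absorb the linear Bernstein tail $(1+\lambda)\log n/(nb)$ into the subgaussian tail $\sqrt{(1-\lambda^2)\log n/(nb)}$ up to a universal constant. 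A union bound over $i \in [n]$ and both parts costs a factor of $4$ in failure probability, giving the stated $4n^{1-c}$. Multiplying through by $2\expp{2}$ from the projection step converts $\sqrt{1-\lambda^2}$ into $\sqrt{\expp{4}-1}$ (since $\expp{4}\lambda^2 = 1$) and $1+\lambda$ into $1+\expp{2}$, producing the two summands of $A(\sigma)$.

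The main obstacle is the bookkeeping of the three simultaneous roles of $\lambda$: the radial shrinkage removed by projection (producing the outer $\expp{2}$), the mean that centers the noise (shifting what we expand around), and the parameter appearing in both the almost-sure and variance bounds on $\exp(\rmi 2\pi\eta_i) - \lambda$. Making the Bernstein-tail absorption clean under $nb \geq \log n$ and ensuring all $\lambda$-dependent factors align to yield exactly $A(\sigma) = \sqrt{\expp{4}-1} + (1+\expp{2})$ is where essentially all the care of the argument is concentrated.
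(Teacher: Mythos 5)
Your proposal is correct and follows essentially the same route as the paper: the same projection inequality (your $\abs{u/\abs{u}-h}\leq 2\abs{u-\lambda h}/\lambda$ is the paper's bound applied after rescaling $\htil$ by $\expp{2}$), the identical bias--variance decomposition (your $R$ and $\expp{2}T$ are exactly the paper's $\bias(x)$ and $V(x)$), Taylor expansion plus the reproducing property for the bias, and Bernstein with a union bound for the noise. The only nit is that your stated variance bound omits the factor $2$ coming from the count of at most $2nb$ grid points in the window $\abs{x_i-x}\leq b$, but this is absorbed by the slack in $q_2$.
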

%
%
%\begin{proof} 
%
%\end{proof}
%
%
We now easily obtain the following result by instantiating Lemma \ref{lem:denoise_bias_var} for the best choice of $b$. This is our main denoising result which we had set out to prove in this section.
%--------------------------------------------
% Main theorem for denoising modulo samples
%--------------------------------------------
\begin{theorem} \label{thm:main_denoise_mod}
Let $f \in \holdclass$ with $\|f^{(\ell)}\|_{\infty}\leq \kappa$ for some $\kappa>0$ and for all integers $0\leq \ell \leq l$. Consider the estimator $\est{h}(x)$ of $h(x) = \exp(\iota 2\pi f(x))$ as in \eqref{eq:hest_proj}, at any $x \in [0,1]$. Denoting $\beta = l + \alpha$, for any $c \geq 2$,  suppose that 
\begin{enumerate}
    \item there exist constants $\Kmin, \Kmax, \Delta > 0$ so that $$\Kmin \indic_{\set{\abs{u} \leq \Delta}} \leq K(u) \leq \Kmax\indic_{\set{\abs{u} \leq 1}}, \quad \forall u \in \matR;$$
    
    \item $b = b^* = (\frac{c A(\sigma) l!}{\beta M'})^{\frac{2}{2\beta+1}} (\frac{\log n}{n})^{\frac{1}{2\beta+1}}$ with $A(\sigma)$ as in Lemma \ref{lem:denoise_bias_var}, and $M' > 0$ depending only on $M,l,\kappa$;
    
    \item $n \geq n_0$, $\frac{n}{\log n} \geq (\frac{\beta M'}{c A(\sigma) l!})^{1/\beta}$ with $n_0$ as in Lemma \ref{lem:mineig_Bnx}. 
\end{enumerate}
Then with probability at least $1-4n^{1-c}$ we have for all $i \in [n]$,
\begin{align} 
    &\abs{\est{h}(x_i) - h(x_i)}\nonumber\\
    & \leq \left( \frac{32M' \Kmax}{l! \ \lambda_0 } \right)^{\frac{1}{2\beta+1}} \left( \frac{64c \Kmax A(\sigma)}{\lambda_0} \right)^{\frac{2\beta}{2\beta+1}}\times \nonumber\\
    & \left[(2\beta)^{-\frac{2\beta}{2\beta+1}} + (2\beta)^{\frac{1}{2\beta+1}} \right]
     \left(\frac{\log n}{n} \right)^{\frac{\beta}{2\beta+1}}\nonumber\\
    &=: \delta(n), \label{eq:h_denoise_fin_bd} 
\end{align}
where $\lambda_0$ is as in Lemma \ref{lem:mineig_Bnx}. Furthermore, if \eqref{eq:h_denoise_fin_bd} holds and if $\delta(n) \leq 2$, then
\begin{equation} \label{eq:modulo_denoise_fin_bd} 
    d_w(\est{f(x_i) \bmod 1}, f(x_i) \bmod 1) \leq \frac{\delta(n)}{4}, \quad \forall i \in [n]. 
\end{equation}
\end{theorem}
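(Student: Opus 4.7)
The plan is to assemble the ingredients already in place rather than derive anything new. First I invoke Proposition \ref{prop:smoothness_h}, which, under the uniform derivative bounds $\|f^{(\ell)}\|_\infty \le \kappa$, places the real and imaginary parts $h_R, h_I$ in $\holdclassnew$ for some $M'$ depending only on $l, M, \kappa$; this is precisely the smoothness hypothesis needed by Lemma \ref{lem:denoise_bias_var}. The two-sided kernel assumption (1) in the theorem supplies both the lower envelope required by Lemma \ref{lem:mineig_Bnx} and the uniform upper bound $\Kmax$ required by Lemma \ref{lem:wst_props}, so the conclusions of all three supporting results are simultaneously available.

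I then instantiate Lemma \ref{lem:denoise_bias_var}, using $C_* = 8\Kmax/\lambda_0$ to make the constants explicit as $q_1 = 32 M' \Kmax/(l!\,\lambda_0)$ and $q_2 = 64 c\,\Kmax A(\sigma)/\lambda_0$, and optimize the bandwidth by balancing the bias $q_1 b^\beta$ against the variance proxy $q_2\sqrt{\log n/(nb)}$. Differentiating in $b$ and setting to zero yields
\begin{equation*}
b^* = \left(\frac{q_2}{2\beta q_1}\right)^{2/(2\beta+1)} \left(\frac{\log n}{n}\right)^{1/(2\beta+1)},
\end{equation*}
and direct substitution shows $q_2/(2\beta q_1) = c A(\sigma) l!/(\beta M')$, which matches the bandwidth prescribed in hypothesis (2). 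Evaluating the bias-variance sum at $b^*$, the bias and variance terms acquire the factors $(2\beta)^{-2\beta/(2\beta+1)}$ and $(2\beta)^{1/(2\beta+1)}$ respectively, sharing the common prefactor $q_1^{1/(2\beta+1)} q_2^{2\beta/(2\beta+1)}$; substituting the explicit $q_1, q_2$ reproduces the displayed $\delta(n)$. Before concluding the first claim I check the side conditions of Lemma \ref{lem:denoise_bias_var} at $b = b^*$: the requirement $nb^* \ge \log n$ rearranges directly to $n/\log n \ge (\beta M'/(c A(\sigma) l!))^{1/\beta}$, which is hypothesis (3); the remaining conditions $b^* \ge 1/(2n)$, $b^* \to 0$ with $nb^* \to \infty$, and $n \ge n_0$ are immediate from the explicit form of $b^*$ together with hypothesis (3).

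For the second inequality I pass from the chord bound $|\est{h}(x_i) - h(x_i)| \le \delta(n) \le 2$ on $\bbT_1$ to an angular bound. Writing $h(x_i) = \exp(\iota 2\pi f(x_i))$ and $\est{h}(x_i) = \exp(\iota 2\pi\,\est{f(x_i)\bmod 1})$, and setting $\theta := 2\pi\, d_w(\est{f(x_i)\bmod 1}, f(x_i)\bmod 1)\in[0,\pi]$, the chord identity gives $|\est{h}(x_i) - h(x_i)| = 2\sin(\theta/2)$. Since $\theta/2\in[0,\pi/2]$, the concavity estimate $\sin y \ge (2/\pi) y$ applies; combined with the chord bound this yields $\theta/\pi \le \delta(n)/2$, i.e.\ $d_w \le \delta(n)/4$. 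The hypothesis $\delta(n)\le 2$ is used only to keep the chord bound non-vacuous on $\bbT_1$.

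The argument is essentially bookkeeping once Proposition \ref{prop:smoothness_h} and Lemma \ref{lem:denoise_bias_var} are in hand; there is no conceptual obstacle. The only mildly delicate point is tracking the composite constants through the optimization and verifying that the prescribed $b^*$ indeed satisfies the side condition $nb \ge \log n$ on the stated range of $n$, both of which are routine.
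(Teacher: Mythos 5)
Your proposal is correct and follows essentially the same route as the paper: apply Proposition \ref{prop:smoothness_h} to get $h_R,h_I\in\holdclassnew$, minimize the bias--variance bound of Lemma \ref{lem:denoise_bias_var} over $b$ to obtain the prescribed $b^*$ and the displayed constants, and check that hypothesis (3) is exactly $nb^*\ge\log n$. The only cosmetic difference is that for \eqref{eq:modulo_denoise_fin_bd} you derive the chord-to-angle step ($|\est{h}-h|=2\sin(\pi d_w)\ge 4d_w$) directly rather than citing Fact 4 of the earlier paper; both are fine.
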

\begin{proof}
Denoting $F(b)$ to be the bound in \eqref{eq:bias_var_bd}, we easily verify that the global minimizer of $F$ is 
$b^* = \left(\frac{q_2}{2\beta q_1} \right)^{\frac{2}{2\beta + 1}} \left(\frac{\log n}{n} \right)^{\frac{1}{2\beta+1}}$ with $q_1, q_2$ as in Lemma \ref{lem:denoise_bias_var}.  
Clearly $b_n^* \rightarrow 0$ as $n \rightarrow \infty$, and $n b^*_n \geq \log n$ for the stated condition on $n/\log n$. Plugging $b = b^*$ in \eqref{eq:bias_var_bd} leads to the stated bound in \eqref{eq:h_denoise_fin_bd} after some simplifications. The bound in \eqref{eq:modulo_denoise_fin_bd} follows directly using \cite[Fact 4]{fanuel20}.
\end{proof}
%--------------------------------------------
% Putting it together
%--------------------------------------------
\section{Error rate for recovering $f$}
Given the denoised estimates of $f(x_i) \bmod 1$ for each $i$, we can now recover $f$ following the steps described in \cite{fanuel20}. Indeed, we first recover estimates $\ftil(x_i)$ of the samples $f(x_i) \bmod 1$ using the sequential unwrapping procedure outlined as Algorithm 2 in \cite{fanuel20}, for the univariate setting $d = 1$.  
Denoting $\gest(x_i) = \est{f(x_i) \bmod 1}$, we recall from \cite{fanuel20} that $\ftil(x_1) = \ghat(x_1)$ and
\begin{equation} \label{eq:seq_unwrap_proc}
     \ftil(x_i) = \ftil(x_{i-1}) + 
    \left\{
\begin{array}{rl}
d_i \ ; & \text{ if } \abs{d_i} < 1/2, \\
1 + d_i \ ; & \text{ if } d_i < -1/2, \\
-1 + d_i \ ; & \text{ if } d_i > 1/2.
\end{array} \right.
\end{equation}
for $d_i = \ghat(x_i) - \ghat(x_{i-1})$ with $i\geq 2$.
We also know that there exists $L > 0$ such that 
\begin{equation} \label{eq:lip_L}
 \abs{f(x)-f(y)} \leq L \abs{x-y}^{\min\set{\beta,1}},
\end{equation}
where $L = M$ if $\beta \leq 1$, and $L = \kappa$ otherwise. 
Then, following the steps in \cite[Lemma's 2.2 2.3]{fanuel20}, it is easy to verify that if \eqref{eq:modulo_denoise_fin_bd} holds along with the condition $\delta(n) + \frac{2L}{n^{\min\set{\beta,1}}} < 1$  then for some integer $q^*$, 
\begin{equation} \label{eq:f_samp_bd}
 \abs{\ftil(x_i) + q^* - f(x_i)} \leq \frac{\delta(n)}{4}, \quad \forall i \in [n].   
\end{equation}

Next, we use these estimates to obtain an estimate of $f$ via spline-based quasi-interpolants (QI). QI's are linear operators $\calQ_n: C([0,1]) \rightarrow C([0,1])$, where $\calQ_n(g)$ depends only on the values $g(x_i)$ for $i = 1,\dots,n$. These objects are classical in the literature, see for e.g. \cite{devore_book, deBoor1990} for a detailed overview including the construction of these operators. It is well known (see \cite[Remark 2.6]{fanuel20}) that there exists a constant $C_{l,\alpha,M} > 0$ (depending only on $l,\alpha,M$) such that
\begin{equation} \label{eq:qi_bound}
 \norm{\calQ_n(g) - g}_{\infty} \leq C_{l,\alpha} n^{-(l+\alpha)} = C_{l,\alpha} n^{-\beta}, 
\end{equation}
for all $ g \in \holdclass$.
Denoting $\ftil \in C([0,1])$ to be a function which takes the values $\ftil(x_i)$ for each $i$, the estimate $\est{f}$ of $f$ is obtained as $\est{f} = \calQ_n(\ftil)$. The complete procedure is outlined as Algorithm \ref{algo:Main} below.   

\begin{minipage}{0.9\linewidth}
\centering
\begin{algorithm}[H]
    \caption{Function recovery from noisy modulo samples} 
    \label{algo:Main} 
    \begin{algorithmic}[1] 
    \STATE \textbf{Input:} noisy modulo samples $y_i \in [0,1)$ for each $x_i = i/n$, $i=1,\dots,n$; $l \in \mathbb{N}_0$.
    
    \STATE \textbf{Output:} Estimate $\est{f}:[0,1] \rightarrow \matR$ of $f$.
    
    \STATE {\bf Denoising step:}  Form $z_i = \exp(\iota 2\pi y_i)$ for $i=1,\dots,n$.
    
    \FOR{$i \in [n]$}
    \STATE Compute LP$(l)$ estimators $\tilde{h}(x_i) \in \mathbb{C}$ as in \eqref{eq:htil_lp_est}.
    
    \STATE Obtain estimate $\est{g}(x_i) = \est{f(x_i) \bmod 1}$ as in \eqref{eq:hest_proj}.
    \ENDFOR 
    
    \STATE {\bf Unwrapping step:} Use $(\est{g}(x_i))_{i \in [n]}$ to obtain $(\ftil(x_{i}))_{i \in [n]}$ as in \eqref{eq:seq_unwrap_proc}.
    
    \STATE {\bf Recovering $f$:} Output $\est{f} = \calQ_n(\ftil)$ where $\calQ_n$ is a spline-based quasi interpolant operator.
    \end{algorithmic}
\end{algorithm}
\vspace{0.1cm}
\end{minipage}

%------------------------------
\paragraph*{Main result for recovering $f$} The following theorem provides a $L_{\infty}$ error bound for recovering $f \in \holdclass$ up to a global integer shift. The proof follows in the same manner as that of \cite[Theorem 2.4]{fanuel20} and is omitted.
\begin{theorem} \label{thm:main_rec_f}
Under the notations in Theorem \ref{thm:main_denoise_mod}, for any $f \in \holdclass$ with $\beta:= l + \alpha$, suppose that $f, n, b$ and the kernel $K: [-1,1] \rightarrow \matR$ satisfy the conditions of Theorem \ref{thm:main_denoise_mod}. 
%Denoting $L$ to be the Lipschitz constant of $f$, and 
Recalling $\delta(n)$ from \eqref{eq:h_denoise_fin_bd} which is defined for a given constant $c \geq 2$, suppose additionally that 
$\delta(n) + \frac{2L}{n^{\min\set{\beta,1}}} < 1$ with $L$ as in~\eqref{eq:lip_L}. If the QI operator $\calQ_n$ satisfies \eqref{eq:qi_bound}, then with probability at least $1-4n^{1-c}$, it holds that the estimate $\est{f} = \calQ_n(\ftil)$ satisfies (for some integer $q^*$) the bound
\begin{equation*}
    \norm{\est{f} + q^* - f}_{\infty} \leq C_{l,\alpha,M} n^{-\beta} + C \delta(n).
\end{equation*}
Here $C > 0$ is an absolute constant while $C_{l,\alpha,M}$ is the constant in \eqref{eq:qi_bound}.
\end{theorem}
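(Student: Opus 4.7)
The plan is to combine the three stages of Algorithm~\ref{algo:Main} via the triangle inequality, conditioning on the high-probability event from Theorem~\ref{thm:main_denoise_mod}. On this event, which holds with probability at least $1-4n^{1-c}$ (matching the probability in the conclusion), the modulo estimates satisfy $d_w(\est{f(x_i) \bmod 1}, f(x_i) \bmod 1) \leq \delta(n)/4$ uniformly over $i \in [n]$. All subsequent steps are deterministic, so no further union bound is needed.

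Next, I would port the sequential unwrapping analysis of \cite[Lemmas~2.2--2.3]{fanuel20} verbatim. That argument shows that Itoh-style unwrapping succeeds whenever the per-sample denoising error plus the worst-case consecutive increment of $f$ is strictly less than $1/2$. Here the former is $\delta(n)/4$ while the latter is at most $L/n^{\min\{\beta,1\}}$ by \eqref{eq:lip_L}, so the hypothesis $\delta(n) + 2L/n^{\min\{\beta,1\}} < 1$ is more than enough. This produces an integer $q^*$ for which the grid bound \eqref{eq:f_samp_bd} holds, i.e.\ $|\ftil(x_i) + q^* - f(x_i)| \leq \delta(n)/4$ for all $i \in [n]$.

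The final step moves from grid values to the continuous estimate via $\calQ_n$. Using linearity of $\calQ_n$ and the fact that it reproduces constants (hence in particular the integer $q^*$), one decomposes
\[
\est{f}(x) + q^* - f(x) = \calQ_n(\ftil + q^* - f)(x) + [\calQ_n(f) - f](x).
\]
The second term is bounded by $C_{l,\alpha,M}\, n^{-\beta}$ via \eqref{eq:qi_bound}, since the assumption $\|f^{(\ell)}\|_{\infty} \leq \kappa$ places $f$ in $\holdclass$. For the first term, one invokes the standard fact that spline-based QIs are $L_{\infty}$-bounded in the sense that $\calQ_n(g)(x)$ is a linear combination of the sample values $g(x_i)$ whose $\ell_1$-weight sequence is uniformly bounded in $n$ and $x$; applying this with $g = \ftil + q^* - f$ and combining with \eqref{eq:f_samp_bd} yields a contribution of order $\delta(n)$. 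Adding the two contributions gives the stated inequality, with the constant $C$ absorbing the QI operator norm. The only nontrivial bookkeeping is to check that $q^*$ commutes with $\calQ_n$, which is immediate from constant-reproduction; beyond this I do not anticipate any genuine technical obstacle, which is presumably why the authors defer to the template of \cite[Theorem~2.4]{fanuel20}.
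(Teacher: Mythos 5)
Your proposal is correct and follows exactly the route the paper intends: condition on the event of Theorem \ref{thm:main_denoise_mod}, invoke the unwrapping analysis to get \eqref{eq:f_samp_bd}, and then split $\est{f}+q^*-f$ into a QI-approximation term controlled by \eqref{eq:qi_bound} and a term controlled by the $L_\infty$-boundedness of $\calQ_n$ on the sample values. The paper omits this proof, deferring to \cite[Theorem 2.4]{fanuel20}, whose argument is precisely this three-stage combination, so there is nothing further to compare.
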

Theorem \ref{thm:main_rec_f} suggests that 
$\|\est{f} + q^* - f\|_{\infty} = O\left(\left(\log (n)/n \right)^{\frac{\beta}{2\beta + 1}}\right).$ 
As remarked earlier, this rate matches the optimal $L_{\infty}$ rate for estimating functions lying in $\holdclass$, for the model \eqref{eq:fmod1_model} without the modulo operation (see \cite{nemirovski2000topics}).

\section{Numerical simulations}
We consider the function
\begin{equation}
f(x) = 4x \cos(2\pi x)^2 - 2 \sin(2\pi x)^2 + 4.7\label{eq:function}
\end{equation}
and the noise model~\eqref{eq:fmod1_model} with $\sigma = 0.12$. The output of Algorithm~\ref{algo:Main} is illustrated in Figure~\ref{fig:ex2}. The following other methods are considered: kNN denoising~\cite{fanuel20} (kNN), an unconstrained quadratic program~\cite{tyagi20} (UCQP) and a trust region subproblem~\cite{CMT18_long} (TRS).
A comparison between these methods is given in Figure~\ref{fig:wraparound_RMSE_ex2}, where the Root Mean Square Error (RMSE) between the ground truth and the recovered samples is displayed\footnote{The code is available at \url{https://github.com/mrfanuel/denoising-modulo-samples-local-polynomial-estimator}}.
\begin{figure}
    \includegraphics[scale = 0.55]{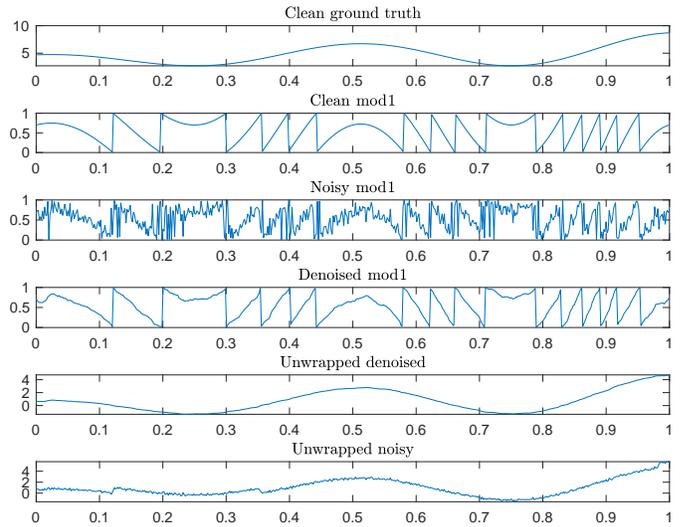}
    \caption{Recovery of the function~\eqref{eq:function} from $n=600$ noisy modulo samples via a local polynomial estimator. \label{fig:ex2}}
\end{figure}
\begin{figure}
    \includegraphics[scale = 0.25]{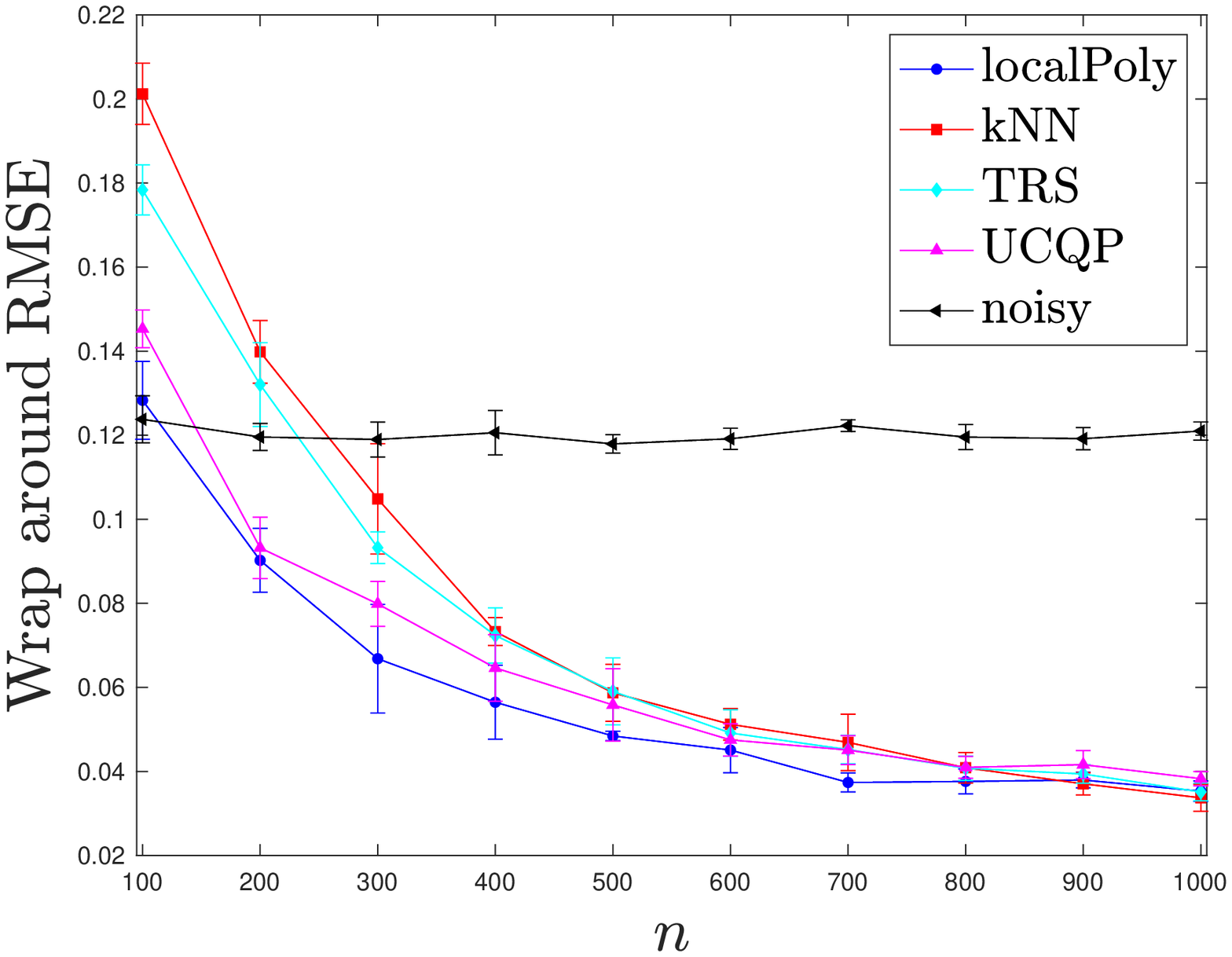}
    \includegraphics[scale = 0.25]{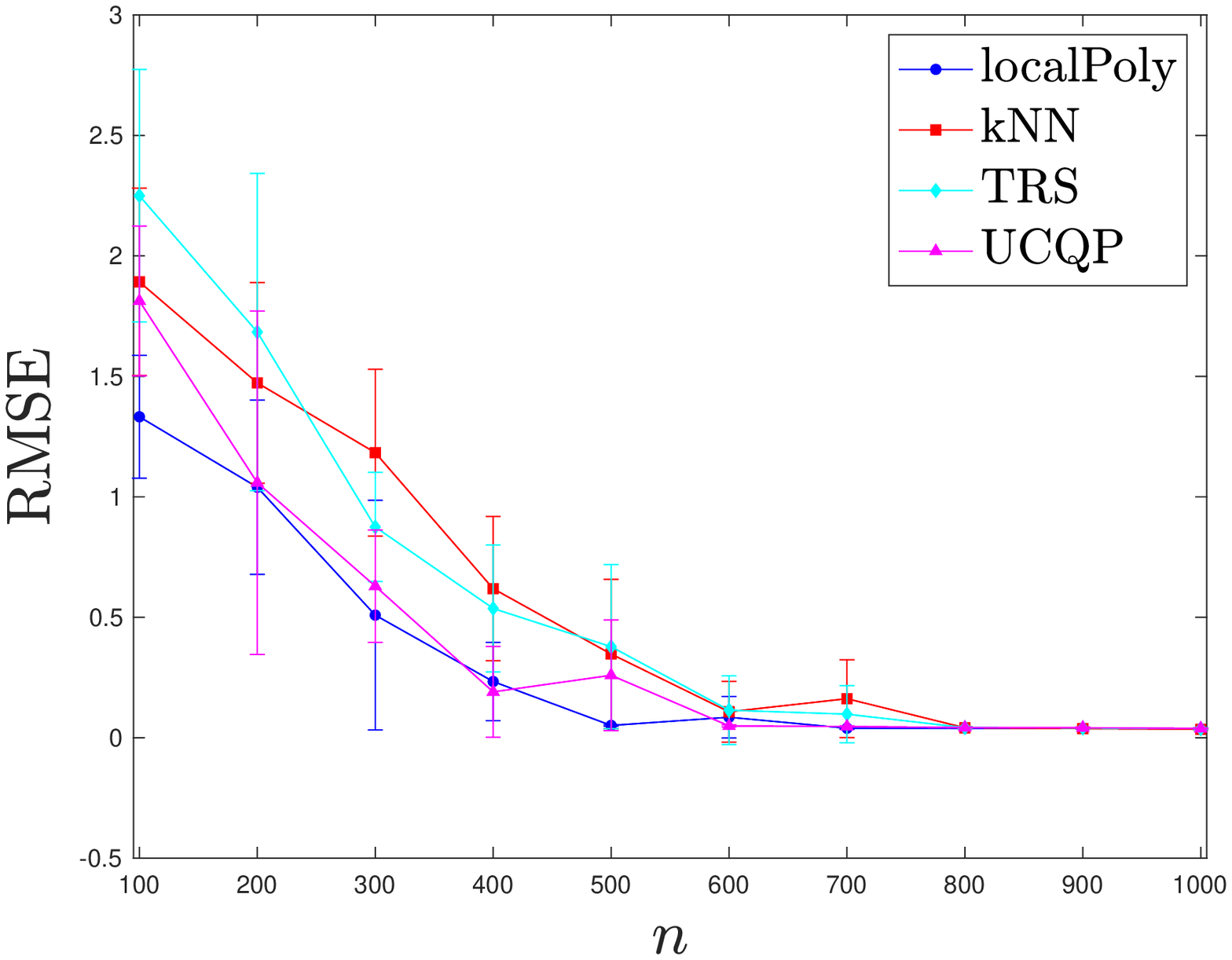}
    \caption{LHS: wrap around RMSE between denoised modulo samples and ground truth vs number of samples $n$. RHS: RMSE between the unwrapped denoised modulo samples and ground truth samples vs $n$. The markers denote the average over $5$ runs and the errors are standard deviations.\label{fig:wraparound_RMSE_ex2}}
\end{figure}
We choose the parameters in the following way.
For the local polynomial estimator, we take  $l = 2$, $\beta = 2.4$ and
$b = 0.1(\log(n)/n)^{\beta/(2\beta+1)}$.
The number of neighbours of kNN is  $k = \ceil{k^\star}$ with $k^\star = 0.09 n^{\frac{2}{3}}\left( \log n\right)^{\frac{1}{3}}$; see~\cite{fanuel20}.
We follow  the analysis of (UCQP) and (TRS) by Tyagi \cite[Corollary $4$ and Corollary $8$]{tyagi20} which gives $\lambda \asymp \left( n^{10/3}\right)^{1/4}$ up to multiplicative constants. Hence, we choose $\lambda= 0.04 n^{10/12}$. The latter methods use a graph $G = ([n], E)$ which is simply here a path graph
$E = \set{\set{i,i+1}: i = 1,\dots,n-1}$.
From the numerical results in Figure~\ref{fig:wraparound_RMSE_ex2}, we observe that all the methods have a similar behavior given the error bars, whereas we conjecture that a fine-tuning of the parameters could boost the performance for a small sample size $n$.

\section*{Acknowledgment}
M.F. acknowledges support from ERC grant \textsc{Blackjack} (ERC-2019-STG-851866, PI: R. Bardenet). %and ANR AI chair \textsc{Baccarat} (ANR-20-CHIA-0002).
\bibliography{references}
\bibliographystyle{./IEEEtran}

% comment the appendix if necessary
\newpage
\onecolumn
%-------------------------------
% Proof of Holder smoothness 
%-------------------------------

%\renewcommand{\thesection}{\Alph{section}}
% \renewcommand{\thesubsection}{\Alph{section}.\arabic{subsection}}

%\renewcommand*{\thesection}{\Alph{section}}
%\renewcommand*{\thesubsection}{\thesection.\arabic{subsection}}

%\setcounter{section}{0}
%\setcounter{subsection}{0}

%\begin{center}
%    \textbf{\Large Appendix}
%\end{center}

%
  \appendices
\section{Proof of Lemma~\ref{lem:denoise_bias_var}}
Starting with the observation that for any $x \in [0,1]$, 
$    \est{h}(x) = \htil(x)/|\htil(x)| = \expp{2}\htil(x)/|\expp{2}\htil(x)|,$
we obtain using \cite[Proposition 3.3]{LiuPhaseSynchronization} the bound 
\begin{equation} \label{eq:proj_ineq}
    \abs{\est{h}(x) - h(x)} \leq 2\abs{\expp{2}\htil(x) - h(x)}.
\end{equation}
Henceforth, we will focus on bounding the quantity $\abs{\expp{2}\htil(x) - h(x)}$. Since $B_{nx} \succ 0$ for all $x \in [0,1]$, we know from Proposition \ref{prop:wst_reprod} that  $\sum_{i=1}^n W^*_{ni}(x) = 1$, and so using \eqref{eq:htil_uniq_exp}, we can write
\begin{align*}
\expp{2}\htil(x) - h(x) = \underbrace{\sum_i(\expp{2} z_i - h_i) \Wst_{ni}(x)}_{V(x)} + \underbrace{\sum_i(h_i - h(x)) \Wst_{ni}(x)}_{\bias(x)}.
\end{align*}
$V(x)$ is the `variance' term and is the sum of centered, independent random variables (indeed, $\expec[z_i] = \expn{2} h_i$ for each $i$). $\bias(x)$ denotes the `bias' of the estimator. In what follows, we will show that for any given $x \in [0,1]$, 
\begin{align} \label{eq:var_bound_x}
\abs{\bias(x)} \leq  \frac{2M' C_* b^{\beta}}{l!}, \text{ and }  \prob\left(\abs{V(x)} \geq 4c C_* A(\sigma) \left(\frac{\log n }{bn} \right)^{1/2} \right) \leq 4n^{-c}
\end{align}
which together with \eqref{eq:proj_ineq} readily yields the statement of the lemma after a union bound (with $x = x_i$) over $i \in [n]$.
%
%
%-------------------
% Bounding Bias
%-------------------
\paragraph*{Bounding $\abs{\bias(x)}$}
We start by writing 
\begin{equation*}
    \bias(x) = \sum_{i=1}^n (h_{R,i} - h_R(x) + \iota  (h_{I,i} - h_I(x)) \Wst_{ni}(x).
\end{equation*}
Since $h_R, h_I \in \holdclassnew$, hence we have using Proposition \ref{prop:wst_reprod} that
\begin{align*}
    \bias(x)
     = \sum_{i} \frac{(h_R^{(l)}(x+\tau_i(x_i - x)) -h_R^{(l)}(x))}{l!} (x_i - x)^l\Wst_{ni}(x) + \iota \sum_{i} \frac{(h_I^{(l)}(x+\tau_i'(x_i - x)) -h_I^{(l)}(x))}{l!} (x_i - x)^l\Wst_{ni}(x)
\end{align*}
where $\tau_i,\tau_i' \in [0,1]$ for all $i \in [n]$. Now we can bound $\abs{\bias(x)}$ as
\begin{align*}
    \abs{\bias(x)} &\leq 2M'\sum_i \frac{\abs{x_i-x}^{\beta}}{l!} \abs{\Wst_{ni}(x)} = 2M'\sum_i \frac{\abs{x_i-x}^{\beta}}{l!} \abs{\Wst_{ni}(x)} \indic_{\set{\abs{x_i-x} \leq b}} \leq \frac{2M' C_* b^{\beta}}{l!}
\end{align*}
where we used Lemma \ref{lem:wst_props}.

%-------------------------
% Bounding Variance
%-------------------------
\paragraph*{Bounding $\abs{V(x)}$}
We start by writing 
\begin{align*}
V(x) &= \sum_i(\expp{2} z_{R,i} - h_{R,i}) \Wst_{ni}(x) + \iota \sum_i(\expp{2} z_{I,i} - h_{I,i}) \Wst_{ni}(x) =: V_{R}(x) + \iota V_{I}(x).
\end{align*}
Since $\abs{V(x)} \leq \abs{V_R(x)} + \abs{V_I(x)}$, we will bound the two RHS terms using Bernstein's inequality. Its enough to bound $\abs{V_R(x)}$ since the same bound will apply for $\abs{V_I(x)}$. To this end, first note that each term in the summation is uniformly bounded as 
\begin{equation*}
 \abs{\expp{2} z_{R,i} - h_{R,i}} \  \abs{\Wst_{ni}(x)}  
 \leq (1+\expp{2})\frac{C_*}{nb},
\end{equation*}
where we used Lemma \ref{lem:wst_props}. Moreover, it is easy to verify the bound 
\begin{align*}
    \expec\left[(\expp{2} z_{R,i} - h_{R,i})^2 \right] 
    \leq \expec \left[\abs{\expp{2} z_{i} - h_{i}}^2 \right] = \expp{4} \expec \left[\abs{z_i - \expn{2} h_i}^2 \right] = \expp{4}-1.
\end{align*}
Hence, it follows using Lemma \ref{lem:wst_props} that 
\begin{align*}
  \sum_i \expec[(\expp{2} z_{R,i} - h_{R,i})^2] (\Wst_{ni}(x))^2  
  &\leq (\expp{4}-1)\sum_i(\Wst_{ni}(x))^2 \\
  &= (\expp{4}-1)\sum_i(\Wst_{ni}(x))^2 \indic_{\set{\abs{x_i-x} \leq b}}\\
  &\leq (\expp{4}-1)\frac{C_*^2}{n^2 b^2} \sum_i \indic_{\set{\abs{x_i-x} \leq b}} \\
  &\leq (\expp{4}-1)\frac{C_*^2}{n^2 b^2} \max\set{2nb,1} \\
  &= 2(\expp{4}-1)\frac{C_*^2}{n b} \quad (\text{ if } 2nb \geq 1).
\end{align*}
Then, by using Bernstein's inequality, we obtain for any $c \geq 2$ that with probability at least $1-2n^{-c}$, 
\begin{align*}
 \abs{V_R(x)} 
 &\leq \sqrt{4c \log n(\expp{4}-1)\frac{C_*^2}{n b}} + \frac{2c C_*}{3}\left(\frac{\log n}{nb}\right)(1+\expp{2}) \\
 &\leq 2cC_*A(\sigma) \left(\frac{\log n}{nb}\right)^{1/2} \quad (\text{ if } nb \geq \log n).
\end{align*}
This leads to the bound on $\abs{V(x)}$ in \eqref{eq:var_bound_x} after a union bound, which also concludes the proof.

\section{Proof of Proposition \ref{prop:smoothness_h} \label{sec:proof_smoothness_h}} 

Define for convenience $\phi_s(x) = \cos(x - s\pi/2)$ with $s\in\{0,1\}$ so that $\phi_0(x) = \cos(x)$ and $\phi_1(x) = \sin(x)$.
We now consider functions of the form
\[
  \phi_s(g(x)) \text{ where } g(x) = 2\pi f(x) \text{ with } f \in\holdclass,
\]
so that $g\in \holdclassTwoPi$. \MF{The boundedness assumption on $\norm{f^{(\ell)}}_{\infty}$ implies $\norm{g^{(\ell)}}_{\infty} \leq 2\pi \kappa$ for all $0 \leq \ell \leq l$}. In particular, we have $\phi_0(g(x)) = h_R(x)$ and $\phi_1(g(x)) = h_I(x)$. For simplicity, we omit the dependence on $s$ and simply work with $\phi(x)$.

Our objective is to show that there exists a constant $M'>0$ such that 
\[
|\phi^{(l)}(g(x)) - \phi^{(l)}(g(y))| \leq M' |x-y|^{\beta - l},
\]
for all $x,y\in [0,1]$.
\subsection*{Preliminary results \label{prelim}}
We start by stating useful elementary results.
First, we give a simple result based on Taylor's theorem.
\begin{lemma}\label{eq:lemDerivative}
  Let $g\in \holdclassTwoPi$ and $\beta = l + \alpha$. Assume $l\geq 1$. Then, for all $0\leq p < l$, the following inequality
  \[
    |g^{(p)}(x) - g^{(p)}(y)| \leq \frac{2\pi M}{(l-p)!} |x-y|^{\beta - p} + \sum_{k=1}^{l -p} \frac{|g^{(k+p)}(y)|}{k!} |x-y|^k,
  \]
  holds for all $x,y\in [0,1]$.
\end{lemma}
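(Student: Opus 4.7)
The plan is to invoke Taylor's theorem with the Lagrange form of the remainder, applied to $g^{(p)}$ expanded about $y$. Since $g\in\holdclassTwoPi$ with $l\geq 1$ and $0\leq p < l$, the function $g^{(p)}$ is $(l-p)$ times continuously differentiable on $[0,1]$, so Taylor's theorem gives
\begin{equation*}
g^{(p)}(x) = \sum_{k=0}^{l-p-1} \frac{g^{(k+p)}(y)}{k!}(x-y)^k + \frac{g^{(l)}(\xi)}{(l-p)!}(x-y)^{l-p}
\end{equation*}
for some $\xi$ strictly between $x$ and $y$. Moving the $k=0$ term to the left-hand side, this isolates $g^{(p)}(x) - g^{(p)}(y)$ in terms of a finite sum plus the remainder.

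The key trick I would use is to rewrite the remainder by adding and subtracting $g^{(l)}(y)$:
\begin{equation*}
\frac{g^{(l)}(\xi)}{(l-p)!}(x-y)^{l-p} = \frac{g^{(l)}(\xi)-g^{(l)}(y)}{(l-p)!}(x-y)^{l-p} + \frac{g^{(l)}(y)}{(l-p)!}(x-y)^{l-p}.
\end{equation*}
The first piece is controlled by the H\"older condition on $g^{(l)}$: since $|\xi - y|\leq |x-y|$, we have $|g^{(l)}(\xi)-g^{(l)}(y)| \leq 2\pi M |x-y|^{\alpha}$, which yields a contribution of $\tfrac{2\pi M}{(l-p)!}|x-y|^{\beta-p}$ (using $\beta - p = l-p+\alpha$). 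The second piece is exactly the term $k = l-p$ of the sum $\sum_{k=1}^{l-p} \frac{|g^{(k+p)}(y)|}{k!}|x-y|^k$ once one takes absolute values.

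Combining the two contributions with the triangle inequality on the remaining Taylor terms $k=1,\dots,l-p-1$ (bounded by their absolute values) gives the stated inequality. There is no real obstacle: the only conceptual step is the swap of the Lagrange remainder $g^{(l)}(\xi)$ for $g^{(l)}(y)$ at the cost of a H\"older error, which is what enables the right-hand side to be expressed purely in terms of derivatives evaluated at $y$. Everything else is bookkeeping. I would also briefly remark that this lemma is stated for $l\geq 1$ because when $l=0$ there are no intermediate derivatives and the claim reduces directly to the H\"older condition on $g$ itself.
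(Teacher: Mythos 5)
Your proposal is correct and follows essentially the same route as the paper: a Taylor expansion of $g^{(p)}$ about $y$ to order $l-p$, with the Lagrange remainder rewritten (by adding and subtracting $g^{(l)}(y)$) so that the H\"older condition on $g^{(l)}$ controls the difference $g^{(l)}(\xi)-g^{(l)}(y)$ and the $g^{(l)}(y)$ piece is absorbed as the $k=l-p$ term of the sum. The paper simply states the expansion with the remainder already in that add-and-subtract form, so the two arguments coincide.
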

\begin{proof}
  By using the order $q-1$ Taylor expansion of a $q$-times differentiable function $z(x)$ at $y$ and the remainder theorem, we have
  \[
    z(x) = \sum_{k=0}^q \frac{z^{(k)}(y)}{k!}(x-y)^k + \frac{(x-y)^{q}}{q!} \left( z^{(q)}(\xi) - z^{(q)}(y)\right),
  \]
  for some $\xi$ between $x$ and $y$, thanks to the remainder theorem. Now choose $z(x) = g^{(p)}(x)$ and $q = l -p$. This gives
  \[
    g^{(p)}(x) = \sum_{k=0}^{l -p} \frac{g^{(p+k)}(y)}{k!}(x-y)^k + \frac{(x-y)^{l -p}}{(l -p)!} \left( g^{(l)}(\xi) - g^{(l)}(y) \right).
  \]
  Hence, by using the definition of the  H\"older class, we find
  \begin{align*}
    \left|g^{(p)}(x)-g^{(p)}(y) - \sum_{k=1}^{l -p} \frac{g^{(p+k)}(y)}{k!}(x-y)^k\right| \leq \frac{2\pi M}{(l-p)!}|x-y|^{l-p} |\xi -y|^{\beta -l} \leq \frac{2\pi M}{(l-p)!}|x-y|^{\beta-p},
  \end{align*}
 where we used $|\xi -y|\leq |x - y|$ since $\xi$ is between $x$ and $y$.
 Next, we use the reverse triangle inequality to yield
 \begin{align*}
  \left||g^{(p)}(x)-g^{(p)}(y)| - \big|\sum_{k=1}^{l -p} \frac{g^{(p+k)}(y)}{k!}(x-y)^k\big|\right| \leq  \frac{2\pi M}{(l-p)!}|x-y|^{\beta-p}.
\end{align*}
Hence, it holds
\begin{align*}
  \left|g^{(p)}(x)-g^{(p)}(y)\right| - \sum_{k=1}^{l -p} \left|\frac{g^{(p+k)}(y)}{k!}(x-y)^k\right| \leq  \frac{2\pi M}{(l-p)!}|x-y|^{\beta-p},
\end{align*}
and the desired result follows by using once again the triangle inequality.
\end{proof}
A simple consequence of Lemma~\ref{eq:lemDerivative} is given below.
\begin{lemma} \label{eq:boundsOnDerivatives}
    Let $g\in \holdclassTwoPi$ and denote $\beta = l + \alpha$. Suppose $l\geq 1$ and let $p$ be an integer such that $0\leq p < l$. \MF{Let $\kappa>0$ a constant such that $|g^{(\ell)}(x)|\leq 2\pi \kappa$ for all $x\in[0,1]$ and $0 \leq \ell \leq l$}. Then, there exists a constant $C_2(M,p,\kappa)>0$ such that
    \[
      |g^{(p)}(x) - g^{(p)}(y)| \leq C_2 |x-y|^{\beta - l}.
    \]
\end{lemma}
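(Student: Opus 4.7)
The plan is to apply Lemma \ref{eq:lemDerivative} directly and then collapse every term on its right-hand side to a multiple of $|x-y|^{\beta-l}=|x-y|^{\alpha}$, using only the boundedness of the derivatives of $g$ and the fact that we are working on $[0,1]$.

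Concretely, I would first invoke Lemma \ref{eq:lemDerivative} with the given $p$, which yields
\[
|g^{(p)}(x) - g^{(p)}(y)| \leq \frac{2\pi M}{(l-p)!} |x-y|^{\beta - p} + \sum_{k=1}^{l -p} \frac{|g^{(k+p)}(y)|}{k!} |x-y|^k.
\]
Since $k+p \leq l$ for each index in the sum, the hypothesis $\|g^{(\ell)}\|_{\infty}\leq 2\pi\kappa$ gives $|g^{(k+p)}(y)|\leq 2\pi\kappa$, and the sum is bounded by $2\pi\kappa\sum_{k=1}^{l-p}|x-y|^k/k!$.

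The next step is the only real substance: since $x,y\in[0,1]$ we have $|x-y|\leq 1$, so $|x-y|^{\gamma}\leq |x-y|^{\alpha}$ for every $\gamma\geq \alpha$. Both exponents appearing above satisfy this: we have $\beta - p \geq \beta - (l-1) = 1+\alpha \geq \alpha$, and for each $k\geq 1$ we have $k\geq 1\geq \alpha$. Hence every term on the right-hand side is at most a constant times $|x-y|^{\alpha}=|x-y|^{\beta-l}$.

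Collecting the numerical factors then gives the claim with
\[
C_2(M,p,\kappa) = \frac{2\pi M}{(l-p)!} + 2\pi\kappa\sum_{k=1}^{l-p}\frac{1}{k!},
\]
which depends only on $M$, $p$, $\kappa$ (and on $l$, which is fixed). There is no genuine obstacle: the heavy lifting is already done in Lemma \ref{eq:lemDerivative}, and the remaining argument is purely arithmetic once one observes that the finite interval $[0,1]$ allows every positive power of $|x-y|$ to be dominated by the smallest one, namely $|x-y|^{\alpha}$.
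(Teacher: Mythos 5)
Your proof is correct and follows exactly the same route as the paper's (which is essentially a two-line remark): apply Lemma \ref{eq:lemDerivative}, bound the intermediate derivatives by $2\pi\kappa$, and use that on $[0,1]$ every power $|x-y|^{\gamma}$ with $\gamma\geq\alpha$ is dominated by $|x-y|^{\alpha}=|x-y|^{\beta-l}$. Your version is in fact more explicit than the paper's, supplying the exponent checks and a concrete constant.
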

\begin{proof}
   Recall  the following fact: if $x,y\in [0,1]$, then  $|x-y|\leq |x-y|^s$ for all $s\in (0,1]$. Then, the result follows by using Lemma~\ref{eq:lemDerivative} and \MF{$|g^{(\ell)}(y)|\leq 2\pi\kappa$} for all $y\in [0,1]$ and for all $p+1\leq \ell \leq l$.

\end{proof}
The following fact is a property of the sine function.
\begin{fact} Let $p\in \mathbb{N}_0$. Then, it holds
  $
  |\phi^{(p)}(x)| \leq 1
  $
  for all $x\in [0,1]$.
\end{fact}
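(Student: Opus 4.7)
The plan is to exploit the explicit form of $\phi_s$: since $\phi_s(x) = \cos(x - s\pi/2)$ for $s \in \{0,1\}$, each derivative is again a cosine with a shifted phase, and the bound follows immediately from the trivial inequality $|\cos(\cdot)| \leq 1$.

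Concretely, I would first verify by induction on $p$ that
\begin{equation*}
\phi_s^{(p)}(x) = \cos(x - s\pi/2 + p\pi/2), \quad p \in \mathbb{N}_0.
\end{equation*}
The base case $p=0$ is the definition of $\phi_s$. For the inductive step, I would differentiate using the identity $\tfrac{d}{dx}\cos(x+\theta) = -\sin(x+\theta) = \cos(x+\theta+\pi/2)$, so that the phase shift simply increases by $\pi/2$ at each step. This formula incidentally shows that the derivatives cycle through $\{\cos, -\sin, -\cos, \sin\}$, but the induction avoids having to track the sign or the sine/cosine alternation explicitly.

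Second, I would invoke the elementary bound $|\cos(\theta)| \leq 1$ valid for every $\theta \in \matR$, applied with $\theta = x - s\pi/2 + p\pi/2$. This yields $|\phi_s^{(p)}(x)| \leq 1$ for every $x \in [0,1]$, and in fact for every $x \in \matR$, which is the claim.

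There is essentially no obstacle in this argument; the Fact is a routine consequence of the derivative identity for cosine and is recorded only for later convenience. Its role in the remaining proof of Proposition \ref{prop:smoothness_h} is to provide a uniform bound on the outer factor whenever one expands the derivatives of the composition $\phi \circ g$, for instance via Fa\`a di Bruno's formula or via an induction on the order of differentiation, where it combines with the bounds on $g^{(\ell)}$ from Lemma \ref{eq:boundsOnDerivatives} to yield the required H\"older estimate on $h_R$ and $h_I$.
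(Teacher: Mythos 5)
Your proof is correct; the derivative identity $\phi_s^{(p)}(x)=\cos(x-s\pi/2+p\pi/2)$ established by induction, followed by $|\cos(\theta)|\leq 1$, is exactly the elementary argument the paper has in mind (it states this Fact without proof as an obvious property of sine and cosine).
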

Another useful identity in given in Fact~\ref{fact:diff} which is a simple algebraic fact that is readily proved by adding and subtracting identical terms.
\begin{fact}\label{fact:diff}
  Let $a_1, \dots, a_n$ and $b_1, \dots, b_n$ be real numbers with $n\geq 2$. Then, we have
  \[
    \prod_{i=1}^{n} a_i - \prod_{i=1}^{n} b_i = (a_n-b_n)\prod_{i=1}^{n-1} a_i + \sum_{\ell = 1}^{n-2}\left((a_{n-\ell} - b_{n-\ell}) \prod_{i = n-\ell +1 }^{n} b_i \prod_{j=1}^{n-\ell-1} a_j\right) + (a_1-b_1) \prod_{i=2}^{n} b_i 
  \]
\end{fact}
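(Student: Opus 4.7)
The plan is to prove the stated identity by a standard telescoping argument using ``hybrid products''. I will introduce, for each $k \in \{0, 1, \ldots, n\}$, the quantity
\[
P_k := \Bigl(\prod_{i=1}^{k} a_i\Bigr) \Bigl(\prod_{j=k+1}^{n} b_j\Bigr),
\]
with the convention that an empty product equals $1$. Then by construction $P_0 = \prod_{j=1}^n b_j$ and $P_n = \prod_{i=1}^n a_i$, so the left-hand side can be written as the telescoping sum
\[
\prod_{i=1}^n a_i - \prod_{i=1}^n b_i \; = \; P_n - P_0 \; = \; \sum_{k=1}^n (P_k - P_{k-1}).
\]

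The next step is to identify each increment. Since $P_k$ and $P_{k-1}$ share all factors $a_1, \ldots, a_{k-1}$ and $b_{k+1}, \ldots, b_n$ and differ only in the $k$-th slot ($a_k$ versus $b_k$), a direct factorisation yields
\[
P_k - P_{k-1} \; = \; \Bigl(\prod_{i=1}^{k-1} a_i\Bigr) (a_k - b_k) \Bigl(\prod_{j=k+1}^n b_j\Bigr).
\]
Substituting this back gives a compact single-sum form of the identity, which already contains all the required information.

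Finally, it remains to rewrite this compact sum in the exact shape displayed in Fact \ref{fact:diff}. I will split off the boundary terms $k=n$ and $k=1$: the term $k=n$ has the empty product $\prod_{j=n+1}^n b_j = 1$ and yields $(a_n - b_n)\prod_{i=1}^{n-1} a_i$; the term $k=1$ has the empty product $\prod_{i=1}^{0} a_i = 1$ and yields $(a_1 - b_1)\prod_{j=2}^n b_j$. For the remaining indices $k = 2, \ldots, n-1$ I will reindex via $\ell := n - k$, so that $\ell$ ranges over $1, \ldots, n-2$ and the generic term becomes $(a_{n-\ell} - b_{n-\ell}) \prod_{i=n-\ell+1}^n b_i \prod_{j=1}^{n-\ell-1} a_j$, matching the statement verbatim. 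The only ``obstacle'' is purely notational bookkeeping to align the reindexing with the three-piece presentation in the Fact; there is no analytic content beyond the telescoping identity itself.
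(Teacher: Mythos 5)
Your proof is correct and is exactly the argument the paper has in mind: the paper dismisses this fact as "readily proved by adding and subtracting identical terms," and your telescoping sum over the hybrid products $P_k$ is precisely that add-and-subtract device made explicit, with the reindexing $\ell = n-k$ recovering the stated three-piece form. Nothing further is needed.
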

The next statement is standard about polynomial factorization.
\begin{fact}\label{fact:factorization}
Let $a,b\in \mathbb{R}$. For all integer positive $n\geq 1$, there exists a polynomial $p_{n-1}(a,b)$ of order $n-1$ such that
\[
  a^n - b^n = (a-b) p_{n-1}(a,b).
\]
\end{fact}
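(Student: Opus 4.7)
The plan is to prove Fact~\ref{fact:factorization} by induction on $n \geq 1$, and in fact to exhibit an explicit candidate, namely the geometric-style sum
\[
p_{n-1}(a,b) = \sum_{k=0}^{n-1} a^{n-1-k} b^k = a^{n-1} + a^{n-2} b + \cdots + a b^{n-2} + b^{n-1},
\]
which is a polynomial of total degree $n-1$ in $(a,b)$. This matches the claim about the order. Since the identity is a classical algebraic fact, no obstacle is expected; the only care needed is to make the bookkeeping of the degree explicit.

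For the base case $n=1$, one has $a - b = (a-b)\cdot 1$, so $p_0(a,b) = 1$ works. For the induction step, assume $a^{n-1} - b^{n-1} = (a-b)\, p_{n-2}(a,b)$ with $p_{n-2}$ of degree $n-2$. The key algebraic manipulation is to add and subtract a common term:
\[
a^n - b^n = a(a^{n-1} - b^{n-1}) + (a-b) b^{n-1},
\]
and then substitute the induction hypothesis into the first summand to obtain
\[
a^n - b^n = (a-b)\bigl(a\cdot p_{n-2}(a,b) + b^{n-1}\bigr).
\]
Setting $p_{n-1}(a,b) := a\cdot p_{n-2}(a,b) + b^{n-1}$ produces a polynomial whose total degree is $\max\{1+(n-2),\, n-1\} = n-1$, closing the induction.

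As an alternative (non-inductive) verification, one may directly compute
\[
(a-b)\sum_{k=0}^{n-1} a^{n-1-k} b^k = \sum_{k=0}^{n-1} a^{n-k} b^k - \sum_{k=0}^{n-1} a^{n-1-k} b^{k+1},
\]
and observe that the two sums telescope: reindexing the second sum as $\sum_{j=1}^{n} a^{n-j} b^{j}$ and cancelling the common terms $a^{n-k} b^k$ for $1 \leq k \leq n-1$ leaves exactly $a^n - b^n$. This simultaneously exhibits the polynomial $p_{n-1}$ and confirms that its degree is $n-1$, finishing the proof.
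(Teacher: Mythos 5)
Your proof is correct; the paper itself states this fact without proof (it is labelled as ``standard''), so your explicit argument is exactly the intended content. Both of your verifications --- the induction via $a^n - b^n = a(a^{n-1}-b^{n-1}) + (a-b)b^{n-1}$ and the direct telescoping of $(a-b)\sum_{k=0}^{n-1}a^{n-1-k}b^k$ --- are valid, and either one alone suffices; the explicit formula $p_{n-1}(a,b)=\sum_{k=0}^{n-1}a^{n-1-k}b^k$ also makes transparent the degree claim and the boundedness of $p_{k_l-1}(g^{(l)}(x),g^{(l)}(y))$ that the paper later uses in the proof of Proposition~\ref{prop:smoothness_h}.
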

Finally, we recall the well-known Fa\`a di Bruno formula for derivatives of composed functions.
\begin{fact}[Fa\`a di Bruno formula]\label{fact:FaaDiBruno}
Let $l\in \mathbb{N}_0$ and let $g,\bar{g}$ be real valued $l$-times differentiable functions. We have 
\[
(\bar{g}\circ g)^{(l)}(x) = \sum_{k_1,\dots,k_l}\frac{l!}{k_1! \dots k_l!} \bar{g}^{(k)}(g(x)) \left(\frac{g^{(1)}(x)}{1!}\right)^{k_1} \dots \left(\frac{g^{(l)}(x)}{l!}\right)^{k_l},
\]
with $k = k_1 + \dots + k_l$ and where the sum goes over $k_1,\dots,k_l$ such that $k_1 + 2k_2 + \dots + l k_l = l$.
\end{fact}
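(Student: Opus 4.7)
The plan is to prove Faà di Bruno's formula by induction on $l$. The base case $l=0$ reduces to $(\bar{g}\circ g)(x)=\bar{g}(g(x))$ (the empty product/sum), and the case $l=1$ reduces to the chain rule $(\bar{g}\circ g)'(x)=\bar{g}'(g(x))g'(x)$, which matches the unique tuple $k_1=1$ satisfying the constraint $k_1=1$. These are immediate, so the heart of the argument is the induction step.

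Assume the formula holds at order $l$ and differentiate both sides once. On the left we get $(\bar{g}\circ g)^{(l+1)}(x)$. On the right, for each fixed multi-index $(k_1,\dots,k_l)$ satisfying $\sum_j j k_j=l$, the product rule applied to
\[
\frac{l!}{k_1!\cdots k_l!}\,\bar{g}^{(k)}(g(x))\prod_{j=1}^{l}\left(\frac{g^{(j)}(x)}{j!}\right)^{k_j}
\]
splits into two kinds of new terms. Type (i): differentiating $\bar{g}^{(k)}(g(x))$ produces $\bar{g}^{(k+1)}(g(x))\,g^{(1)}(x)$, which shifts the multi-index to $(k_1+1,k_2,\dots,k_l,0)$ in the enlarged $(l+1)$-tuple. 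Type (ii): for each $i\in\{1,\dots,l\}$ with $k_i\geq 1$, differentiating one of the $k_i$ identical factors $(g^{(i)}/i!)^{k_i}$ yields the extra weight $k_i\cdot g^{(i+1)}(x)/i!=k_i(i+1)\cdot g^{(i+1)}(x)/(i+1)!$, which shifts $k_i\mapsto k_i-1$, $k_{i+1}\mapsto k_{i+1}+1$.

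Now fix a target tuple $(k_1',\dots,k_{l+1}')$ with $\sum_{j=1}^{l+1} j k_j'=l+1$ and collect all contributions. The case $k_{l+1}'=1$ (which by the constraint forces $k_j'=0$ for $j\leq l$) is handled separately: only Type (ii) with $i=l$ contributes from the source $(0,\dots,0,1)$, producing a coefficient $l!\cdot 1\cdot(l+1)=(l+1)!$, matching the expected $(l+1)!/(0!\cdots 0!\,1!)$. In the case $k_{l+1}'=0$, Type (i) contributes $l!\,k_1'/(k_1'!\cdots k_l'!)$ from the source $(k_1'-1,k_2',\dots,k_l')$, and Type (ii) with source $(\dots,k_i'+1,k_{i+1}'-1,\dots)$ contributes $l!\,(i+1)k_{i+1}'/(k_1'!\cdots k_l'!)$. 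Summing gives
\[
\frac{l!}{k_1'!\cdots k_l'!}\Big(k_1'+\sum_{i=1}^{l-1}(i+1)k_{i+1}'\Big)=\frac{l!}{k_1'!\cdots k_l'!}\sum_{j=1}^{l} j k_j'=\frac{l!\,(l+1)}{k_1'!\cdots k_l'!\,k_{l+1}'!},
\]
using the constraint $\sum_{j=1}^l j k_j'=l+1$ when $k_{l+1}'=0$. This is exactly $(l+1)!/(k_1'!\cdots k_{l+1}'!)$, closing the induction.

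The main obstacle is purely combinatorial bookkeeping: tracking which source multi-indices map to a given target under Type (i) and Type (ii) transitions, and checking that the weighted sum of their coefficients telescopes to $l+1$. A cleaner but less elementary alternative is a partition-counting argument: one interprets $(\bar{g}\circ g)^{(l)}$ as a sum over set partitions $\pi$ of $\{1,\dots,l\}$ of $\bar{g}^{(|\pi|)}(g(x))\prod_{B\in\pi}g^{(|B|)}(x)$, and groups partitions by their block-size profile $(k_1,\dots,k_l)$. The number of partitions with a fixed profile is $l!/\prod_j (j!)^{k_j}\,k_j!$, which when combined with $\prod_j(g^{(j)})^{k_j}$ produces precisely the stated coefficient $l!/(k_1!\cdots k_l!)\cdot\prod_j(g^{(j)}/j!)^{k_j}$. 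Either route yields the formula; I would present the induction route since it is self-contained and only uses the product rule.
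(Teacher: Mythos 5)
The paper does not prove this statement at all: it is labelled a \emph{Fact} and explicitly ``recalled'' as the classical Fa\`a di Bruno formula, so there is no in-paper argument to compare against. Your induction is correct and self-contained. The base cases are right, the two types of terms produced by the product rule are classified correctly (differentiating $\bar{g}^{(k)}(g(x))$ shifts $k_1\mapsto k_1+1$; differentiating one factor $\bigl(g^{(i)}/i!\bigr)^{k_i}$ shifts $k_i\mapsto k_i-1$, $k_{i+1}\mapsto k_{i+1}+1$ with weight $k_i(i+1)$), and the key bookkeeping identity checks out: for a target $(k_1',\dots,k_{l+1}')$ with $k_{l+1}'=0$ the collected coefficient is
\[
\frac{l!}{k_1'!\cdots k_l'!}\Bigl(k_1'+\sum_{i=1}^{l-1}(i+1)k_{i+1}'\Bigr)=\frac{l!}{k_1'!\cdots k_l'!}\sum_{j=1}^{l} j k_j'=\frac{(l+1)!}{k_1'!\cdots k_{l+1}'!},
\]
where sources with a negative entry contribute zero automatically because of the factors $k_1'$ and $k_{i+1}'$, and the exceptional target $(0,\dots,0,1)$ is handled correctly. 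The only (minor) point worth making explicit is that the induction step at order $l+1$ uses $\bar{g}^{(k+1)}$ with $k$ up to $l$, so you need $\bar{g}$ to be $(l+1)$-times differentiable there, consistent with the hypothesis at that level. Your partition-counting alternative is also a standard and valid route; either would serve as a proof of this recalled fact.
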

\subsection*{Main part of the proof of  Proposition~\ref{prop:smoothness_h} \label{mainpart}}
The proof mainly relies on the triangle inequality applied to the Fa\`a di Bruno formula (see, Fact~\ref{fact:FaaDiBruno} with $\bar{g}= \phi$) and Fact~\ref{fact:diff}. 
%We also use that each derivative of $g(x)$ up to order $l$ is uniformly bounded on $[0,1]$ since $g(x)$ is $l$-times continuously differentiable on $[0,1]$; 
\MF{Let us also recall that $|g^{(\ell)}(x)|\leq 2\pi\kappa$ for all $x\in[0,1]$ and $0\leq \ell \leq l$}. 
Using Fa\`a di Bruno formula with the triangle inequality, we obtain
\begin{equation}
        \left|(\phi\circ g)^{(l)}(x) - (\phi\circ g)^{(l)}(y)\right|\leq  \sum_{k_1,\dots,k_l}\frac{l!}{k_1! \dots k_l!}  \left|\Delta_{k_1,\dots,k_l}(x)-\Delta_{k_1,\dots,k_l}(y)\right|,\label{eq:BoundByFaaDiBruno}
\end{equation}
with $k = k_1 + \dots + k_l$ and where the sum goes over $k_1,\dots,k_l$ such that $k_1 + 2k_2 + \dots + l k_l = l$. Here, we defined
\[
\Delta_{k_1,\dots,k_l}(x) = \phi^{(k)}(g(x)) \left(\frac{g^{(1)}(x)}{1!}\right)^{k_1} \dots \left(\frac{g^{(l)}(x)}{l!}\right)^{k_l}.
\]
Notice that the above product includes at most $l+1$ non trivial factors.
Next, we use Fact~\ref{fact:diff} and obtain
\begin{align*}
    &\Delta_{k_1,\dots,k_l}(x) - \Delta_{k_1,\dots,k_l}(y)\\ &=\left(\left(\frac{g^{(l)}(x)}{l!}\right)^{k_l} - \left(\frac{g^{(l)}(y)}{l!}\right)^{k_l}\right) \phi^{(k)}(g(x)) \left(\frac{g^{(1)}(x)}{1!}\right)^{k_1} \dots \left(\frac{g^{(l-1)}(x)}{(l-1)!}\right)^{k_{l-1}}\\
    &+ \sum_{\ell = 1}^{l-1}\left(\left(\frac{g^{(l-\ell)}(x)}{(l-\ell)!}\right)^{k_{l-\ell}}-\left(\frac{g^{(l-\ell)}(y)}{(l-\ell)!}\right)^{k_{l-\ell}}\right) \prod_{i = l-\ell +1 }^{l} \left(\frac{g^{(i)}(y)}{i!}\right)^{k_i} \prod_{j=1}^{l-\ell-1} \left(\frac{g^{(j)}(y)}{j!}\right)^{k_j}\\
    &+ \left(\phi^{(k)}(g(x))-\phi^{(k)}(g(y))\right) \left(\frac{g^{(1)}(x)}{1!}\right)^{k_1} \dots \left(\frac{g^{(l)}(x)}{l!}\right)^{k_l}.
\end{align*}
A triangle inequality and the bound \MF{$|g^{(\ell)}(x)|\leq 2 \pi \kappa$ for all $x\in[0,1]$ and $0\leq \ell \leq l$} yields
\begin{align*}
    |\Delta_{k_1,\dots,k_l}(x) - \Delta_{k_1,\dots,k_l}(y)| & \leq  \left|\left(\frac{g^{(l)}(x)}{l!}\right)^{k_l} - \left(\frac{g^{(l)}(y)}{l!}\right)^{k_l}\right| \left(\frac{\MF{2\pi\kappa}}{1!}\right)^{k_1} \dots \left(\frac{\MF{2\pi\kappa}}{(l-1)!}\right)^{k_{l-1}}\\
    &+ \sum_{\ell = 1}^{l-1}\left|\left(\frac{g^{(l-\ell)}(x)}{(l-\ell)!}\right)^{k_{l-\ell}}-\left(\frac{g^{(l-\ell)}(y)}{(l-\ell)!}\right)^{k_{l-\ell}}\right| \prod_{i = l-\ell +1 }^{l} \left(\frac{\MF{2\pi\kappa}}{i!}\right)^{k_i} \prod_{j=1}^{l-\ell-1} \left(\frac{\MF{2\pi\kappa}}{j!}\right)^{k_j}\\
    &+ \left|\phi^{(k)}(g(x))-\phi^{(k)}(g(y))\right| \left(\frac{\MF{2\pi\kappa}}{1!}\right)^{k_1} \dots \left(\frac{\MF{2\pi\kappa}}{l!}\right)^{k_l}.
\end{align*}
Consider each of the three terms on the RHS of the last inequality.

For the first term, we can assume that $k_l \geq 1$, otherwise it vanishes. By using the factorization result in Fact~\ref{fact:factorization}, we have 
\begin{align*}
\left|\left(g^{(l)}(x)\right)^{k_l} - \left(g^{(l)}(y)\right)^{k_l}\right| &= \left|g^{(l)}(x) - g^{(l)}(y)\right| \cdot \left|p_{k_l-1}(g^{(l)}(x),g^{(l)}(y))\right|\\
&\leq  \left|g^{(l)}(x) - g^{(l)}(y)\right| \bar{C}_0(\kappa,l)\\
& \leq 2\pi M |x-y|^{\beta -l} \bar{C}_0(\kappa,l),
\end{align*}
where we used a triangle inequality to bound $\left|p_{k_l-1}(g^{(l)}(x),g^{(l)}(y))\right|$ and where $\bar{C}_0(\kappa,l)$ is some positive polynomial of $\kappa,l$.

For the second term, in the same way, we can upper bound each of the terms in the finite sum, i.e., for a given $\ell$,
\[
    \left|\left(g^{(l-\ell)}(x)\right)^{k_{l-\ell}}-\left(g^{(l-\ell)}(y)\right)^{k_{l-\ell}}\right|
    \leq 
    \bar{C}_1(\kappa,l) \left|g^{(l-\ell)}(x)-g^{(l-\ell)}(y)\right|\leq 
    \bar{C}_1(\kappa,l) C_2(M,\kappa,l-\ell)|x-y|^{\beta -l},
\]
where we used Lemma~\ref{eq:boundsOnDerivatives} for the last inequality. Here, $\bar{C}_1(\kappa,l)$ is some positive polynomial of $\kappa,l$ and $C_2(M,\kappa,l-\ell) > 0$ depends only on the indicated terms.

For the third term, remark that  $|\phi(x) -\phi(y)| \leq  |x-y|$ for all $x,y\in \mathbb{R}$. By definition of $\phi(x)$, this implies that \[\left|\phi^{(k)}(g(x))-\phi^{(k)}(g(y))\right|\leq  \left|g(x)-g(y)\right|,\]
for any non-negative integer $k$. Also, by Lemma~\ref{eq:boundsOnDerivatives}, $\left|g(x)-g(y)\right| \leq C_2(M,\kappa) |x-y|^{\beta-l}$, which yields
\[
    \left|\phi^{(k)}(g(x))-\phi^{(k)}(g(y))\right|\leq C_2(M,\kappa) |x-y|^{\beta-l},
\]
for all $x,y\in[0,1]$ and all non-negative integer $k$. 

Finally, putting everything together, we obtain
\begin{align*}
    |\Delta_{k_1,\dots,k_l}(x) - \Delta_{k_1,\dots,k_l}(y)|  \leq C(M,\kappa,l)|x-y|^{\beta-l},
\end{align*}
where $C(M,\kappa,l) > 0$ depends only on $M,\kappa$ and $l$. Plugging this within~\eqref{eq:BoundByFaaDiBruno}, we obtain the result.

\end{document}